\documentclass[12pt]{article}
\usepackage[utf8]{inputenc}
\usepackage[T1]{fontenc}
\usepackage[numbers,sort&compress]{natbib}
\usepackage[margin=1in]{geometry}
\usepackage{amsmath, amssymb, amsthm}
\usepackage{graphicx}
\usepackage{hyperref}
\usepackage{natbib}
\usepackage{booktabs}
\usepackage{array}
\usepackage{xcolor}
\usepackage{setspace}
\usepackage{caption}
\usepackage{subcaption}
\usepackage{mathtools}
\usepackage{lineno}

\hypersetup{colorlinks=true,linkcolor=blue!60!black,citecolor=blue!60!black,urlcolor=blue!60!black}
\newtheorem{theorem}{Theorem}[section]

\newtheorem{corollary}[theorem]{Corollary}
\newtheorem{lemma}[theorem]{Lemma}
\newtheorem{remark}{Remark}[section]

\newcommand{\st}{\tilde{s}}
\newcommand{\Cs}{C^{s}}
\newcommand{\Ca}{C^{a}}
\newcommand{\Cr}{C^{r}}
\newcommand{\Cso}{C^{s*}}   
\newcommand{\Cao}{C^{a*}}   
\newcommand{\Csa}{C^{s\ast}} 
\newcommand{\Rz}{\mathcal{R}_0}
\newcommand{\Rphi}{\mathcal{R}_\phi}
\newcommand{\Rmu}{\mathcal{R}_\mu}
\newcommand{\Rc}{\mathcal{R}_c}
\newcommand{\Rcb}{\mathcal{R}_c^{\,b}}

\newcommand{\bb}{\beta_{\mathrm{bio}}}
\newcommand{\Rcl}{R_{\mathrm{cl}}}

\begin{document}

\title{When Self-Protection Backfires: Adaptive Contact Behavior Expands the Endemic Basin in an Addiction Model with Nonlinear Relapse}
\author{%
Fabio Sanchez\and
  Escuela de Matemática-CIMPA, Universidad de Costa Rica,\\ Ciudad Universitaria Rodrigo Facio,
  San José, 11501, Costa Rica.
}
\date{\today}
\maketitle

\begin{abstract}
We extend the Susceptible--Addicted--Reformed (SAR) model of \cite{sanchez2023}, which exhibits a forward--backward bifurcation driven by nonlinear relapse, by embedding an epi-economic behavioral layer in the spirit of \cite{fenichel2011}. At-risk individuals choose contact levels by solving a finite-horizon dynamic program that balances the utility of social activity against the expected cost of addiction. We prove that the basic reproduction number \( R_0 \) and the local stability of the addiction-free equilibrium remain unchanged by the behavioral layer. However, the endemic structure is fundamentally altered: the behavioral response collapses exactly to a scalar mixing factor \( M \), and the bifurcation curve factorizes as \( R_0(a) = R_{\rm cl}(a)/M(a) \). This yields an exact comparison principle: the saddle-node fold shifts to lower \( R_0 \) (enlarging the endemic basin) if and only if \( M \ge 1 \) along the branch. For rational self-protective behavior under conditional proportional mixing, we prove \( M \ge 1 \), so the basin enlarges; the opposite holds under frequency-dependent mixing. Numerical continuation shows that, at baseline parameters, the fold moves left by \( \Delta R_0 \approx -0.035 \) and the critical initial addiction level drops by 2--6 percentage points. Gillespie simulations confirm that the enlarged basin increases the stochastic probability of addiction establishment by up to threefold near threshold. This counterintuitive result that rational self-protection can make addiction easier to establish has direct implications for prevention policy.

\end{abstract}



\section{Introduction}
\label{sec:intro}

Addiction is among the most persistent of global public-health challenges. The World Health Organization and the United Nations Office on Drugs and Crime document substance-use disorders affecting hundreds of millions of people worldwide, with a burden that has continued to grow rather than recede \citep{WHO2024,UNODC2025}; recent analyses of the Global Burden of Disease data find the disability and mortality attributable to drug-use disorders still rising, and project further increases for decades to come \citep{Dong2025}. A defining feature of this burden is its \emph{persistence}: addictive disorders relapse, recur, and re-establish in populations even under sustained intervention.
The policies meant to contain them prevention campaigns, treatment subsidies, and deterrence programs are increasingly evaluated through mathematical models that treat addiction as a socially transmitted condition spreading through contact \citep{sanchez2007,sanchez2023}. This epidemic framing of addictive behavior has a long lineage, spanning compartmental models of cigarette smoking \citep{Rowe1992}, optimal-control formulations of drug epidemics \citep{Behrens2000}, and SIR-type models of alcohol use \citep{sanchez2007}, in which initiation, cessation, and relapse are cast as transitions driven by social contact. Among these, the Susceptible--Addicted--Reformed (SAR) model of \citep{sanchez2023} is notable for its analytic richness: a nonlinear relapse mechanism, modulated by a reducing function $g(\st)$ that captures the deterrent influence of reformed individuals, produces a \emph{forward--backward bifurcation}. The system can sustain addiction even when the basic reproductive number $\Rz<1$, provided the initial addicted population is large enough. Outcome then depends on initial conditions, a hallmark of backward bifurcation and hysteresis \citep{DushoffHuangCastilloChavez1998}, and a feature with sharp policy consequences: driving $\Rz$ below one need not eradicate an established epidemic.

Yet in this and most epidemic-style models of addiction, the contact process that drives transmission is held fixed. This is a conspicuous omission for addiction in particular, where the central object of an individual's exposure to risky social settings is itself a behavioral choice. A separate literature, originating with \citep{fenichel2011}, makes precisely this point for infectious disease: the contact rate is not a biological constant but a decision that responds to perceived risk. Modeling susceptibles as forward-looking agents who reduce contact as prevalence rises, Fenichel et al.\ showed that ignoring adaptation biases epidemiological inference and reshapes an epidemic's trajectory. The broader behavioral-epidemiology literature has since developed this theme extensively \citep[e.g.][]{funk2010,bauch2013}, including a game-theoretic strand in which individuals trade off the costs of precaution against perceived infection risk \citep{reluga2010}, but almost entirely for forward-bifurcating, SIR-type systems. Backward bifurcations are themselves well documented in addiction models, where they arise from structural mechanisms such as multi-strain superinfection or saturated treatment \citep{Memarbashi2022,sanchez2023}; what has not been examined
is how \emph{adaptive behavior} interacts with such a bifurcation, that is, whether endogenous, forward-looking contact choice enlarges or contracts the bistable region in which addiction persists below threshold. That intersection of behavioral economics and backward bifurcation dynamics is the gap this paper addresses.

We close that gap by embedding the Fenichel behavioral mechanism in the Sanchez SAR model, and the result overturns a natural intuition. One would expect that allowing at-risk individuals to protect themselves and rationally withdraw from risky contact as addiction spreads would suppress the epidemic. We prove the opposite: under standard proportional mixing, self-protective behavior \emph{enlarges} the basin of attraction of the endemic state, shifting the backward bifurcation to lower $\Rz$ and making established addiction both easier to ignite and harder to eradicate. The mechanism is not a numerical artifact but a structural one: when susceptibles cut contact, they shrink the shared contact pool
faster than they reduce their exposure to addicted individuals, so each remaining contact carries an elevated risk.

Our analysis makes this precise and, crucially, general. We show that the entire behavioral layer collapses to a single scalar modifier $M$ on the force of infection, and that along the endemic branch the bifurcation curve factorizes as $\Rz(a)=\Rcl(a)/M(a)$, where $\Rcl$ is the classical Sanchez curve. This yields an exact comparison principle: the saddle-node fold moves to lower $\Rz$ (enlarging the basin) if and only if $M\ge1$ along the branch, and to higher $\Rz$ if $M\le1$. The direction of the effect thus hinges on a single inequality and is \emph{independent of the functional form} of the contact coupling, resolving what might otherwise look like a modeling-choice artifact into a sharp dichotomy. We prove $M\ge1$ for self-protective behavior under conditional proportional mixing (so the basin enlarges), whereas a frequency-dependent coupling gives $M\le1$ (so it shrinks); the two predictions bracket the classical fold and are, in principle, empirically distinguishable.

The remainder of the paper develops these results. Sections~\ref{sec:model}--\ref{sec:behavior} construct the epi-economic SAR model, in which the force of addiction depends on an endogenous contact level chosen via dynamic programming and calibrated to nest the classical system. Section~\ref{sec:analysis} proves that $\Rz$ and the stability of the addiction-free equilibrium are invariant to the behavioral layer (Theorem~\ref{thm:R0}) and establishes the mixing-factor collapse (Lemma~\ref{lem:Mfactor}). Section~\ref{sec:general} proves the comparison principle. Section~\ref{sec:numerical} quantifies the effect through a parameter sweep, bifurcation continuation, and Gillespie simulation, showing the impact concentrates near $\Rz=1$ and raises stochastic establishment probability up to threefold near threshold. Section~\ref{sec:mechanism} traces the mechanism, Section~\ref{sec:policy} draws out policy implications, and Section~\ref{sec:discussion} concludes with limitations.

\section{The Extended Model}
\label{sec:model}

\subsection{Classical SAR dynamics}
\label{subsec:classic}

We work with the scaled Susceptible--Addicted--Reformed model of
\citep{sanchez2023}. Let $s,a,\st\in[0,1]$ denote the fractions of the population that are susceptible (at risk but not addicted), addicted, and reformed (formerly addicted, currently abstaining), respectively. Because births and deaths balance under a constant turnover rate $\mu$, the fractions satisfy $s+a+\st=1$ for all time, and the dynamics evolve on the simplex
\[
\Delta=\{(s,a,\st):s,a,\st\ge0,\ s+a+\st=1\}.
\]
The model reads
\begin{equation}
\label{eq:sar_classic}
\dot{s}=\mu-\beta g(\st)\,s\,a-\mu s,\quad
\dot{a}=\beta g(\st)\,s\,a+\phi\,\st\,a-(\mu+\gamma)a,\quad
\dot{\st}=\gamma a-\phi\,\st\,a-\mu\st,
\end{equation}
with the \emph{reducing function}
\begin{equation}
\label{eq:reducing}
g(\st)=\frac{\kappa}{1+\nu\st},\qquad \kappa\in(0,1],\ \nu\in[0,1].
\end{equation}

Each term has a behavioral reading. New addiction arises through social contact between susceptibles and the addicted at rate $\beta g(\st)\,s\,a$: the parameter $\beta$ scales the strength of social influence, while the prevalence-dependent factor $g(\st)$ encodes the deterrent effect of the reformed population. As the reformed fraction $\st$ grows, $g$ decreases; reformed individuals discourage the at-risk from initiating, and the parameter $\nu$ tunes how strongly this deterrence operates, with $\nu=0$ recovering a constant transmission coefficient $\kappa$. The term $\phi\,\st\,a$ is the model's defining feature: a \emph{nonlinear relapse} in which contact between reformed and currently addicted individuals drives the reformed back into addiction at rate $\phi$. Recovery ($a\to\st$) occurs at rate $\gamma$, and $\mu$ governs the demographic turnover by which individuals enter the susceptible class and leave every class.

It is the nonlinear relapse term that makes the model analytically rich. The basic reproductive number, obtained from the addiction-free equilibrium
$(s,a,\st)=(1,0,0)$, is
\begin{equation}
\label{eq:thresholds}
\Rz=\frac{\beta\kappa}{\mu+\gamma},\qquad
\Rphi=\frac{\phi}{\mu+\gamma},\qquad
\Rmu=\frac{\mu}{\mu+\gamma},
\end{equation}
where $\Rz$ measures recruitment per addicted individual, $\Rphi$ the relapse-driven secondary recruitment, and $\Rmu$ the demographic dilution. \citep{sanchez2023} show that when $\Rphi>1$, the relapse feedback is strong enough to produce a \emph{forward--backward bifurcation}: a stable endemic state can coexist with the stable addiction-free equilibrium for a range of $\Rz<1$, separated by an unstable branch, so that the eventual outcome depends on the initial addicted fraction. The saddle-node fold terminating this bistable region occurs at a critical value $\Rc<1$ given in closed form \citep[Eq.~5]{sanchez2023} when $\nu=0$, and as the root of a cubic \citep[Eq.~6]{sanchez2023} when $\nu>0$. This fold, and how behavior moves it, is the central object of our analysis.

\subsection{Making the contact process a decision}
\label{subsec:contact}

In \eqref{eq:sar_classic}, the rate of new addiction is proportional to the product $s\,a$, so the contact process is fixed: every individual mixes at the same implicit rate, absorbed into $\beta$. For an addictive behavior, this is a strong assumption, because exposure to the relevant social settings, peer groups, venues, and occasions where initiation occurs is precisely what an at-risk individual can choose to moderate. We therefore make the contact process explicit, following the epi-economic formulation of \citep{fenichel2011}, which belongs to a longer tradition of treating epidemic exposure as a rational economic choice
\citep{GeoffardPhilipson1996}.

Each health type $h\in\{s,a,r\}$ selects a contact level $C^h\ge0$, interpreted as the intensity of participation in risky social activity. Mixing is \emph{conditional} (assortative through activity level) and \emph{proportional}: the rate of type-$m$--type-$n$ contacts is
\begin{equation}
\label{eq:mixing}
C^{mn}=\frac{C^m C^n}{\bar C},\qquad
\bar C:=s\Cs+a\Ca+\st\Cr,
\end{equation}
where $\bar C$ is the population-mean contact level. This is the standard proportionate-mixing kernel \citep{Jacquez1988,BlytheCastilloChavez1989,BusenbergCastilloChavez1991},
adopted in the epi-economic setting by \citep{fenichel2011}: the probability that a contact made by a type-$m$ individual is with a type-$n$ individual is the share $nC^n/\bar C$ of total contact activity supplied by type $n$, so partners are encountered in proportion to their contact effort rather than their raw frequency. Holding all contact levels equal recovers frequency-dependent transmission, whereas heterogeneous levels make a less-active group correspondingly less likely to encounter the feature that proves decisive once addicted individuals choose lower contact than susceptibles.

The force of addiction on a susceptible who chooses contact level $\Cs$ is the transmission rate per susceptible: the biological susceptibility $\bb$, times the deterrence-modulated prevalence $g(\st)\,a$, times the conditional susceptible--addicted contact rate $\Cs\Ca/\bar C$,
\begin{equation}
\label{eq:foi}
\lambda(\Cs;s,a,\st)=\bb\,g(\st)\,a\,\frac{\Cs\Ca}{s\Cs+a\Ca+\st\Cr}.
\end{equation}

\paragraph{Calibration and nesting.}
The new parameter $\bb$ is a purely biological per-contact transmissibility, distinct from the composite social-influence rate $\beta$ of the classical model. We fix it by requiring that the extended model reduce \emph{exactly} to the classical one whenever contacts are held at their behavior-free benchmark. At the addiction-free equilibrium $(s,a,\st)\to(1,0,0)$ all individuals are susceptible and, as we show in Section~\ref{sec:behavior}, the optimal contact level coincides with the static utility optimum $\Cso=b^s/2$. Imposing $\lambda\to\beta\kappa\,a$ as $a\to0^+$ at this benchmark (verified in Lemma~\ref{lem:calib}) gives
\begin{equation}
\label{eq:calib}
\bb=\frac{\beta}{\Cao}=\frac{2\beta}{b^a},
\end{equation}
where $\Cao=b^a/2$ is the addicted static optimum and $b^a$ the corresponding utility parameter (Section~\ref{sec:behavior}). With this calibration, the basic reproductive number is unchanged from \eqref{eq:thresholds}, so any difference between the two models is attributable to behavioral adaptation away from the addiction-free state, not to a reparameterization of transmission.

Addicted and reformed individuals face no forward-looking risk of initiation: an addicted individual is already addicted, and a reformed individual's relapse risk in this model is governed by the contact term $\phi\st a$ rather than by an initiation decision. Both, therefore, choose their contact levels myopically, at the static utility optima
\begin{equation}
\label{eq:static_contacts}
\Ca=\frac{b^a}{2},\qquad \Cr=\frac{b^r}{2}.
\end{equation}
Only the susceptible's contact level $\Csa$ is forward-looking, determined by the dynamic program of Section~\ref{sec:behavior}.

\subsection{The extended system}
\label{subsec:extended}

Substituting the endogenous force of addiction \eqref{eq:foi} into the SAR dynamics yields the epi-economic model studied in the remainder of the paper:
\begin{equation}
\label{eq:sar_adaptive}
\;
\dot{s}=\mu-\lambda(\Csa)\,s-\mu s,\quad 
\dot{a}=\lambda(\Csa)\,s+\phi\,\st\,a-(\mu+\gamma)a,\quad
\dot{\st}=\gamma a-\phi\,\st\,a-\mu\st,
\;
\end{equation}
where $\Csa=\Csa(s,a,\st)$ is the susceptible's optimal contact level, itself a function of the current state through the optimization of Section~\ref{sec:behavior}. The system \eqref{eq:sar_adaptive} is identical in form to the classical model \eqref{eq:sar_classic} except that the constant transmission term $\beta g(\st)\,s\,a$ is replaced by the state-dependent $\lambda(\Csa)\,s$. As we establish in Section~\ref{sec:analysis}, this single substitution leaves the addiction-free equilibrium and $\Rz$ untouched while reshaping the endemic structure, and, via Lemma~\ref{lem:Mfactor}, the entire behavioral modification can be written as one scalar factor multiplying the classical force of infection.

\section{Behavioral Optimization}
\label{sec:behavior}

The susceptible's contact level $\Csa$ in \eqref{eq:sar_adaptive} is the solution of a finite-horizon dynamic program: at each instant, an at-risk individual chooses how much to participate in risky social activity, trading the flow utility of contact against the discounted risk of becoming addicted. This section establishes the program, characterizes its solution, and shows that it nests the classical model as the zero-foresight limit.

\subsection{Preferences and the cost of addiction}

Each health type $h\in\{s,a,r\}$ derives flow utility from contact according to
\begin{equation}
\label{eq:utility}
u^h(C)=\bigl(b^h C-C^2\bigr)^{\gamma_u}-a^h,\qquad \gamma_u\in(0,1],
\end{equation}
following the contact-utility specification of \citep{fenichel2011}. The quadratic kernel $b^hC-C^2$ makes utility single-peaked in contact; there is an interior best level of social activity, neither isolation nor unbounded mixing with
\emph{static optimum}
\begin{equation}
\label{eq:staticopt}
C^{h*}=\arg\max_{C\ge0}u^h(C)=\frac{b^h}{2}.
\end{equation}
The exponent $\gamma_u$ controls the curvature of utility around this peak: a smaller $\gamma_u$ flattens the objective, so that a given health concern induces a larger contact reduction. The constant $a^h$ is a fixed disutility of occupying state $h$, which shifts the level of welfare but not the optimal contact; we set $a^s=a^r=0$ and $a^a>0$, and scale $a^a$ so that $u^a(C^{a*})=0$, i.e.\ the addicted state is normalized to zero flow welfare at its own optimum. The parameter ordering $b^s=b^r\ge b^a\ge0$ encodes two substantive assumptions: susceptibles and reformed
individuals share the same baseline sociability ($b^s=b^r$), while addicted individuals are \emph{less} socially active at their optimum ($b^a\le b^s$, so $C^{a*}\le C^{s*}$), the empirical regularity that addiction narrows the range of ordinary social participation. As shown below, it is precisely this gap $\rho=b^a/b^s<1$ that drives the paper's central effect.

\subsection{The value of being addicted}

An individual's continuation value depends on their health state. The asymmetry of the SAR dynamics induces a corresponding asymmetry in the decision problem: a susceptible actively chooses contact at every step, whereas an addicted individual is passive, simply waiting to recover at the exogenous per-period probability
\begin{equation}
P^r=1-e^{-\gamma}.
\end{equation}
The value of being addicted is therefore computable in closed form. With $k$ periods remaining in the planning horizon, recovery delivers the reformed flow utility $u^r(C^{r*})$ thereafter, and (Appendix~\ref{app:Vi})
\begin{equation}
\label{eq:Vi}
V_k(a)=u^r(C^{r*})\!\left[\frac{1-\delta^{k}}{1-\delta}
-\frac{1-[\delta(1-P^r)]^{k}}{1-\delta(1-P^r)}\right],\qquad V_0(a)=0,
\end{equation}
with discount factor $\delta\in(0,1)$. The bracketed expression is the expected discounted time spent in the reformed (positive-utility) state over the remaining horizon; it is increasing in $k$ and in the recovery rate $\gamma$, so a longer horizon or faster recovery makes addiction less costly relative to remaining susceptible.

\subsection{The susceptible's dynamic program}

A susceptible chooses contact to maximize discounted expected utility over the horizon, anticipating that contact today raises the probability of transitioning into the (lower-value) addicted state. This optimizing formulation places the model within the decision-theoretic strand of behavioral epidemiology, in which individuals choose protective effort in response to prevailing risk \citep{Chen2009,Toxvaerd2020}. The Bellman equation is
\begin{equation}
\label{eq:bellman}
V_t(s)=\max_{\Cs\in[0,\,b^s/2]}\Bigl\{u^s(\Cs)
+\delta\bigl[(1-P^a(\Cs))\,V_{t+1}(s)+P^a(\Cs)\,V_{t+1}(a)\bigr]\Bigr\},
\end{equation}
with the per-period probability of becoming addicted obtained from the force of addiction \eqref{eq:foi},
\begin{equation}
\label{eq:Pa}
P^a(\Cs)=1-\exp\!\bigl(-\lambda(\Cs;s,a,\st)\bigr).
\end{equation}
The maximization is restricted to $\Cs\in[0,b^s/2]$ because contact above the static optimum $b^s/2$ is dominated: it lowers flow utility \emph{and} raises addiction risk, so no susceptible would ever choose it. On this compact interval, the objective is continuous, so a global maximizer exists. The flow utility $u^s$ is concave on $(0,b^s/2)$, while the addiction-risk term $-\delta(V_{t+1}(s)-V_{t+1}(a))\,P^a(\Cs)$ is convex, since $P^a=1-e^{-\lambda}$ is increasing and concave in $\Cs$ (the force of addiction \eqref{eq:foi} saturates in its own contact). The objective is therefore not globally concave in general, and we do not rely on it being so: we compute the global maximizer directly by grid search on $[0,b^s/2]$ (Appendix~\ref{app:numerics}). In practice, across the entire endemic branch and over all behavioral parameters we consider ($\tau\in[6,60]$ days, $\gamma_u\in[0.1,1]$, $\Rz\in[0.90,1.00]$), the objective is single-peaked at every state, with the optimum $\Csa$ interior whenever addiction risk is positive; we found no instance of multimodality.

We close the program with \emph{adaptive expectations}: the agent treats the current population state $(s,a,\st)$ as persisting over the planning horizon $\tau$, rather than forecasting the full epidemic trajectory. This is the assumption of \citep{fenichel2011} and is the natural baseline for two reasons. First, it is informationally minimal; it requires the agent to perceive only current prevalence, not to solve the population-level dynamics, which is the realistic cognitive posture for individual risk decisions. Second, it yields a well-posed period-by-period optimization that can be embedded in the ODE system without a
fixed-point coupling between individual forecasts and aggregate dynamics. A fully rational-expectations (Hamilton--Jacobi--Bellman) treatment, in which agents anticipate the epidemic trajectory and best-respond to it, is a natural extension \citep[as in the differential-game and equilibrium formulations of][]{reluga2010,Toxvaerd2020}; because forward-looking agents would cut contact \emph{earlier} in a rising epidemic, it would strengthen the precautionary response, though as we discuss in Section~\ref{sec:discussion} a larger reduction acts to \emph{attenuate} rather than amplify the basin effect.

\subsection{Optimality condition and the classical limit}

Differentiating \eqref{eq:bellman}, the optimal contact level satisfies the first-order condition
\begin{equation}
\label{eq:foc}
\underbrace{\frac{\partial u^s}{\partial\Cs}}_{\text{marginal utility of contact}}
=\underbrace{\delta\bigl(V_{t+1}(s)-V_{t+1}(a)\bigr)
\frac{\partial P^a}{\partial\Cs}}_{\text{marginal expected cost of addiction}} .
\end{equation}
The left side is the flow benefit of an additional unit of contact; the right side is its expected cost, the product of the discounted value lost on becoming addicted, $V_{t+1}(s)-V_{t+1}(a)\ge0$, and the marginal increase in addiction risk $\partial P^a/\partial\Cs>0$. The susceptible reduces contact below the static optimum exactly to the point where these balance. Higher addiction prevalence $a$ raises the marginal risk $\partial P^a/\partial\Cs$, tightening the FOC and lowering $\Csa$ over the relevant range; the precise sense in which behavior is ``self-protective'' though, as noted, we obtain $\Csa$ by direct maximization
rather than from the FOC alone.

\begin{remark}[Nesting the classical model]
\label{rem:tau0}
The right-hand side of \eqref{eq:foc} is proportional to the value gap $V_{t+1}(s)-V_{t+1}(a)$, which vanishes as the horizon $\tau\to0$ (equivalently $k\to0$ in \eqref{eq:Vi}, giving $V_0(a)=0$ and $V_0(s)=0$). The marginal cost of contact then drops out, the FOC reduces to $\partial u^s/\partial\Cs=0$, and $\Csa=C^{s*}=b^s/2$: the susceptible exercises no precautionary reduction. Substituting into \eqref{eq:sar_adaptive} recovers the classical force of infection at the onset of addiction (where $M\to1$ as $a\to0^+$, Lemma~\ref{lem:Mfactor}); away from onset the static-contact model retains the proportional-mixing premium $M_{\text{static}}\ge1$ of Eq.~\eqref{eq:Mstatic}, so the $\tau=0$ limit is the \emph{behavior-free} proportional-mixing model rather than the frequency-dependent classical one (Section~\ref{sec:mechanism}). The planning horizon $\tau$ thus interpolates between this static-contact benchmark ($\tau=0$) and the fully adaptive model, providing a single dial that measures the strength of the behavioral response, while $\Rz$ and the addiction-free dynamics remain those of the classical system at every $\tau$.
\end{remark}

Because $P^a$ is nonlinear in $\Cs$ through \eqref{eq:foi}, the program
\eqref{eq:bellman} admits no closed-form solution and is solved numerically by backward induction over $[0,\tau]$ on a grid in $\Cs$, with the state advanced and the program re-solved each period (a rolling horizon); details are in Appendix~\ref{app:numerics}.

\section{Equilibrium Analysis}
\label{sec:analysis}

We now establish two facts that frame the rest of the paper: the behavioral layer leaves the basic reproductive number and the stability of the addiction-free equilibrium (AFE) untouched, yet it modifies the endemic dynamics through a single scalar factor on the force of infection. The first fact states that the behavioral layer is invisible \emph{before} an outbreak; the second specifies exactly how it acts \emph{after} one.

\subsection{Invariance of \texorpdfstring{$\Rz$}{R0}}

The calibration \eqref{eq:calib} was chosen so that the extended model coincides with the classical one at the onset of addiction. We first record this formally.

\begin{lemma}
\label{lem:calib}
Under calibration \eqref{eq:calib}, the force of addiction at the addiction-free equilibrium satisfies $\lim_{a\to0^+}\lambda(C^{s*};1,a,0)/a=\beta\kappa$.
\end{lemma}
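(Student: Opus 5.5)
The plan is a direct substitution into the force of addiction \eqref{eq:foi}, followed by a limit of an elementary rational expression. I would evaluate $\lambda$ at the benchmark state $(s,a,\st)=(1,a,0)$ with $\Cs=\Cso=b^s/2$. The addicted and reformed contacts take their static values $\Ca=\Cao=b^a/2$ and $\Cr=b^r/2$ from \eqref{eq:static_contacts}. Since $\st=0$, the reducing function \eqref{eq:reducing} gives $g(0)=\kappa$, and the reformed term in $\bar C$ drops out. Hence
\[
\frac{\lambda(\Cso;1,a,0)}{a}=\bb\,\kappa\,\frac{\Cso\,\Cao}{\Cso+a\,\Cao}.
\]

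Next I would let $a\to0^+$. The denominator $\Cso+a\Cao$ tends to $\Cso=b^s/2$, which is strictly positive. So the ratio converges to $\bb\kappa\,\Cao$, and the factor $\Cso$ cancels. This cancellation is why the limit does not depend on $b^s$: in the proportionate-mixing kernel the susceptible's own contact level appears in both the numerator and, to leading order, in $\bar C$. Substituting the calibration \eqref{eq:calib}, $\bb=\beta/\Cao=2\beta/b^a$, gives $\bb\kappa\Cao=\beta\kappa$, which is the claim.

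There is no real obstacle here; the only point needing care is nondegeneracy. The limit requires $b^s>0$, so that $\bar C$ stays bounded away from $0$, and $b^a>0$, so that \eqref{eq:calib} is defined. I would state both explicitly as standing assumptions, strengthening the ordering $b^s=b^r\ge b^a\ge0$ to $b^a>0$.

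I would also remark on which contact level is meant. The statement is phrased with $\Cs=\Cso$. For the actual optimizer $\Csa$, one additionally needs $\Csa\to\Cso$ as $a\to0^+$. I would argue this by noting that $P^a\to0$ uniformly on $[0,b^s/2]$, so the risk term in \eqref{eq:bellman} vanishes and the maximizer converges to the peak of $u^s$. The same limit then follows by continuity. That argument is only needed for the later use of the lemma, not for the lemma as stated.
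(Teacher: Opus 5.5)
Your proposal is correct and is essentially the paper's proof: substitute $s=1$, $\st=0$ and the static contacts into \eqref{eq:foi}, divide by $a$, let $a\to0^+$ to obtain $\bb\kappa\Ca$, and invoke the calibration $\bb=\beta/\Ca$. Your nondegeneracy assumptions ($b^s>0$, $b^a>0$) and your uniform-convergence remark on $\Csa\to\Cso$ are valid additions that the paper leaves implicit.
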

\begin{proof}
With $s=1$, $\st=0$, and contacts at their static optima, \eqref{eq:foi} gives $\lambda=\bb\,\kappa\,a\,C^{s*}\Ca/(C^{s*}+a\Ca)$. Dividing by $a$ and letting $a\to0^+$ leaves $\bb\,\kappa\,\Ca$, which equals $\beta\kappa$ by the calibration $\bb=\beta/\Ca$.
\end{proof}

\begin{theorem}
\label{thm:R0}
The addiction-free equilibrium $(1,0,0)$ of \eqref{eq:sar_adaptive} is locally asymptotically stable if $\Rz=\beta\kappa/(\mu+\gamma)<1$ and unstable if $\Rz>1$. The threshold value $\Rz$ is independent of all behavioral parameters $(\tau,\delta,b^s,b^a,\gamma_u)$.
\end{theorem}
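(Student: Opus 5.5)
We linearize \eqref{eq:sar_adaptive} at $E_0=(1,0,0)$, working on the simplex by eliminating $s=1-a-\st$. The reduced system in $(a,\st)$ reads
\[
\dot a=\lambda(\Csa;1-a-\st,a,\st)\,(1-a-\st)+\phi\,\st\,a-(\mu+\gamma)a,\qquad
\dot{\st}=\gamma a-\phi\,\st\,a-\mu\st .
\]
The Jacobian at $(a,\st)=(0,0)$ is then lower triangular (or block triangular in the full three-variable form). The $\st$-equation linearizes to $\gamma a-\mu\st$, and the relapse term $\phi\st a$ is quadratic and drops out. The spectrum is therefore $\{-\mu,\ \partial_a\dot a|_{E_0}\}$, together with the trivial eigenvalue $-\mu$ from the $s$-direction if one keeps all three variables. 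Everything reduces to computing the single derivative
\[
\partial_a\bigl[\lambda(\Csa)\,s\bigr]\big|_{E_0}
=\lim_{a\to0^+}\frac{\lambda(\Csa;1,a,0)}{a}.
\]
Derivatives of $\lambda s$ with respect to $\st$ (or $s$) vanish at $E_0$, because $\lambda$ carries the explicit factor $a$ and is bounded.

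The key step is to show that this limit equals $\beta\kappa$ for every choice of behavioral parameters, which lets us invoke Lemma~\ref{lem:calib}. That lemma is stated with $\Cs=C^{s*}$, whereas the model uses the optimized $\Csa(s,a,\st)$. I would therefore establish that $\Csa(1,a,0)\to C^{s*}=b^s/2$ as $a\to0^+$. At $a=0$ we have $\lambda\equiv0$, so $P^a\equiv0$ and the risk term in \eqref{eq:bellman} vanishes. The objective reduces to $u^s(\Cs)$ plus a constant, and its unique maximizer on $[0,b^s/2]$ is $b^s/2$. For small $a$, $P^a(\Cs)\le\lambda\le \bb\kappa a\,\Ca\cdot\Cs/(\Cs+a\Ca)\le \bb\kappa\Ca a$, uniformly in $\Cs$. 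The value gap $V_{t+1}(s)-V_{t+1}(a)$ is bounded, since flow utilities are bounded on the compact contact set and the horizon is finite. Hence the objective converges uniformly to $u^s$ plus a constant. Since $u^s$ has a strict unique maximizer, Berge's maximum theorem (or a direct argmax-continuity argument) gives $\Csa\to b^s/2$.

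With this convergence, the limit becomes
\[
\frac{\lambda}{a}=\bb\kappa\,\frac{\Csa\,\Ca}{\Csa+a\Ca}\;\to\;\bb\kappa\,\Ca=\beta\kappa
\]
by \eqref{eq:calib}. Note that only continuity of $\Csa$ at $a=0$ is needed, not differentiability, because the factor $a$ is explicit. The nontrivial eigenvalue is thus $\beta\kappa-(\mu+\gamma)=(\mu+\gamma)(\Rz-1)$. This gives local asymptotic stability for $\Rz<1$ and instability for $\Rz>1$. Since $\Rz=\beta\kappa/(\mu+\gamma)$ involves none of $\tau,\delta,b^s,b^a,\gamma_u$, the independence claim follows; $b^a$ cancels through the calibration $\bb=2\beta/b^a$.

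The main obstacle is regularity. The vector field involves $\Csa$, which is defined through an argmax and may not be differentiable, so a classical Jacobian may not be directly justified. I would handle this by writing $\lambda s=a\,h(a,\st)$ with $h$ continuous at the origin and $h(0,0)=\beta\kappa$. Then $\dot a=a\,[h-(\mu+\gamma)+\phi\st-\ldots]$. For $\Rz<1$, a Lyapunov or comparison argument yields asymptotic stability: near $E_0$, $\dot a\le -\varepsilon a$ and $\st$ decays linearly. For $\Rz>1$, $\dot a\ge\varepsilon a$ near $E_0$ with $a>0$, which gives instability. This sidesteps differentiability while matching the linearization heuristic. Grid-search discreteness of $\Csa$ is a numerical matter; the theorem concerns the exact argmax.
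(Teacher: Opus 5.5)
Your argument is correct and has the same skeleton as the paper's proof: a triangular Jacobian at $E_0=(1,0,0)$, nontrivial eigenvalue $\partial_a(\lambda s)|_{E_0}-(\mu+\gamma)$, and reduction to Lemma~\ref{lem:calib}. You differ in how you show that the optimized contact leaves that derivative undisturbed.

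The paper argues from the first-order condition \eqref{eq:foc}. The reduction $C^{s*}-\Csa$ is proportional to $\partial P^a/\partial\Cs=O(a)$, so the behavioral correction to $\lambda$ is $O(a^2)$, and one may linearize with contact frozen at $C^{s*}$. You instead work with the exact global argmax. Since $P^a\to0$ uniformly in $\Cs$ while the value gap stays bounded, the objective converges uniformly to $u^s$ up to a $\Cs$-independent constant, and its maximizer therefore converges to the unique maximizer $b^s/2$. The explicit factor $a$ in $\lambda$ then makes mere continuity enough.

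Each route buys something different:
\begin{itemize}
\item The paper's route yields a rate. That rate is what the later expansion $M=1+M_a a+M_{\st}\st+\cdots$ behind \eqref{eq:ccs} requires.
\item However, the paper tacitly identifies the maximizer with the interior FOC root near $C^{s*}$. Note that $\partial P^a/\partial\Cs$ is $O(a)$ only for $\Cs$ bounded away from $0$; at $\Cs=0$, $\st=0$ it equals $\bb\kappa$. The paper elsewhere declines to make this identification, because the objective is not globally concave.
\item Your route needs no concavity, no FOC, and no differentiability of the feedback $\Csa(s,a,\st)$.
\end{itemize}

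Two small points. First, state your uniform bound for all states near $E_0$, not only along $(1,a,0)$. For example, $\lambda\le\bb\kappa\Ca a/s$ holds because $g\le\kappa$ and $\Cs/\bar C\le 1/s$. You need this because your comparison step uses $h$ jointly continuous in $(a,\st)$. Second, that comparison step is valid but optional. Writing $\lambda s=a\,h(a,\st)$ with $h$ continuous at the origin already expresses the field as the classical linear part plus an $o(|x|)$ remainder. That is all the standard Lyapunov-function proof of the linearization principle uses, and it is the same fact the paper's $O(a^2)$ estimate supplies with a rate.
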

\begin{proof}
The key point is that the susceptible's optimal contact deviates from the static optimum only at order $a$. By the first-order condition \eqref{eq:foc}, the contact reduction $C^{s*}-\Csa$ is proportional to the marginal expected cost of addiction, which carries the factor $\partial P^a/\partial\Cs=O(\lambda)=O(a)$ as $a\to0^+$; hence $\Csa=C^{s*}+O(a)$. Since $\lambda$ in \eqref{eq:foi} is itself $O(a)$, the correction to $\Csa$ contributes only at $O(a^2)$ to the force of
infection, and so does not enter the linearization at the AFE.

Linearizing \eqref{eq:sar_adaptive} at $(1,0,0)$ with $\Csa=C^{s*}$ therefore gives, by Lemma~\ref{lem:calib}, the same Jacobian as the classical system:
\[
J(1,0,0)=
\begin{pmatrix}
-\mu & -\beta\kappa & 0\\[2pt]
0 & \beta\kappa-(\mu+\gamma) & 0\\[2pt]
0 & \gamma & -\mu
\end{pmatrix},
\]
with eigenvalues $-\mu$ (multiplicity two) and $\beta\kappa-(\mu+\gamma)=(\mu+\gamma)(\Rz-1)$. The AFE is stable precisely when the last eigenvalue is negative, i.e.\ $\Rz<1$, and the behavioral parameters enter only through the $O(a^2)$ correction, leaving both the Jacobian and $\Rz$ unchanged.
\end{proof}

Theorem~\ref{thm:R0} mirrors the central message of \citep{fenichel2011}: $\Rz$ is a property of the pre-outbreak state and is blind to adaptation, which switches on only once prevalence is positive. The entire effect of behavior is thus an \emph{endemic-regime} phenomenon, which the next subsection localizes to a single multiplicative factor.

\subsection{The behavioral layer collapses to a single mixing factor}

The apparent complexity of the behavioral coupling, an endogenous, state-dependent contact level entering a nonlinear mixing kernel, reduces exactly to one scalar multiplying the classical force of infection.

\begin{lemma}[Mixing-factor collapse]
\label{lem:Mfactor}
Under calibration \eqref{eq:calib}, the force of addiction \eqref{eq:foi} factors as
\begin{equation}
\label{eq:Mcollapse}
\lambda(\Cs;s,a,\st)=\underbrace{\beta\,g(\st)\,a}_{\text{classical force}}\;\cdot\;
M,\qquad
M:=\frac{\Cs}{s\Cs+a\Ca+\st\Cr}=\frac{\Cs}{\bar C},
\end{equation}
where $\bar C=s\Cs+a\Ca+\st\Cr$ is the population-mean contact level, so $M$ is the ratio of the susceptible's own contact rate to the population mean. Moreover $M=1$ at the addiction-free equilibrium, so \eqref{eq:sar_adaptive} nests \eqref{eq:sar_classic} exactly.
\end{lemma}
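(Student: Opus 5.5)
The factorization is a direct algebraic rewriting of \eqref{eq:foi} using the calibration \eqref{eq:calib}. I will substitute $\bb=\beta/\Ca$, which holds because the addicted contact level is fixed at its static optimum $\Ca=\Cao=b^a/2$ by \eqref{eq:static_contacts}. Then
\[
\lambda=\frac{\beta}{\Ca}\,g(\st)\,a\,\frac{\Cs\Ca}{s\Cs+a\Ca+\st\Cr}=\beta g(\st)\,a\,\frac{\Cs}{\bar C}.
\]
The factor $\Ca$ cancels identically, for every state and every choice of $\Cs$, and this yields \eqref{eq:Mcollapse} with $M=\Cs/\bar C$. One point should be stated explicitly: the cancellation uses that $\Ca$ is state-independent, which is exactly why the addicted are modeled as myopic. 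It also uses $\bar C>0$ on the simplex. This holds whenever $\Cs>0$, or whenever $a\Ca+\st\Cr>0$. On the degenerate set where $\bar C=0$, $\lambda$ is taken to be $0$ and any value of $M$ works.

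For the second claim I will evaluate $M$ at the AFE. The natural reading is the limit as $(s,a,\st)\to(1,0,0)$, since $a=0$ gives $\lambda=0$ trivially. From the proof of Theorem~\ref{thm:R0}, $\Csa=\Cso+O(a)$, so along the approach $\Cs\to\Cso=b^s/2$. In the same limit $\bar C=s\Cs+a\Ca+\st\Cr\to\Cso$. Hence $M\to\Cso/\Cso=1$, using $\Cso>0$, i.e.\ $b^s>0$. This is the same computation as Lemma~\ref{lem:calib}, now packaged as a ratio.

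To conclude that \eqref{eq:sar_adaptive} nests \eqref{eq:sar_classic}, I will note that the force in \eqref{eq:sar_adaptive} equals $\beta g(\st)a\cdot M$, while the classical force is $\beta g(\st)a$. The two therefore agree wherever $M=1$: at the AFE and to leading order at onset. By Remark~\ref{rem:tau0} they also agree under any contact profile with $\Cs=\bar C$, in particular when all contact levels are equal. Combined with Theorem~\ref{thm:R0}, this shows the linearizations coincide.

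The main obstacle is interpretive rather than technical. The statement says the nesting is ``exact'', but $M=1$ holds only at the AFE and not along the endemic branch. I would phrase that step carefully as agreement at the addiction-free state and at first order in $a$, making clear that the endemic deviation is carried entirely by $M$. That deviation is precisely what the comparison principle of Section~\ref{sec:general} exploits.
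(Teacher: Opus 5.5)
Your proposal is correct and follows the paper's proof: substituting $\bb=\beta/\Ca$ into \eqref{eq:foi} cancels $\Ca$, and as $(s,a,\st)\to(1,0,0)$ one has $\bar C\to\Cs$, so $M\to1$. Your caveat is sound and consistent with the paper's own Remark~\ref{rem:tau0} and Eq.~\eqref{eq:Mstatic}: the nesting holds exactly only at the addiction-free state and to first order in $a$, not along the endemic branch.
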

\begin{proof}
Substituting $\bb=\beta/\Ca$ from \eqref{eq:calib} into \eqref{eq:foi} cancels the factor $\Ca$, leaving $\beta g(\st)a\cdot\Cs/(s\Cs+a\Ca+\st\Cr)$, which is \eqref{eq:Mcollapse}. At the AFE $(s,a,\st)\to(1,0,0)$, so $\bar C\to\Cs$ and $M\to1$.
\end{proof}

Lemma~\ref{lem:Mfactor} is the organizing device of the paper: \emph{every} behavioral effect on the dynamics is carried by the single scalar field $M$, and the entire question of how adaptation reshapes the bifurcation reduces to how $M$ departs from unity along the endemic branch. Two structural facts about $M$ set up that analysis.

First, consider contacts held at their static optima, $\Cs=\Cr=c_0:=b^s/2$ and $\Ca=\rho c_0$ with $\rho=b^a/b^s<1$. Because reformed individuals share the susceptible contact level ($\Cr=\Cs=c_0$), their contribution to the numerator and denominator of $M$ is identical and the $\st$-dependence cancels exactly, leaving
\begin{equation}
\label{eq:Mstatic}
M_{\text{static}}(a)=\frac{c_0}{(1-a-\st)c_0+a\rho c_0+\st c_0}
=\frac{1}{1-a(1-\rho)}=1+(1-\rho)\,a+O(a^2)\ \ge\ 1 .
\end{equation}
The reformed class is therefore ``invisible'' to the mixing factor under static contacts; the deviation of $M$ above unity is driven entirely by the \emph{addicted} class, which is less socially active ($\rho<1$). A susceptible's \emph{relative} exposure rises as the population shifts into this low-activity class, the seed of the basin-enlargement result. Second, $M$ is increasing in the susceptible's own contact,
\[
\frac{\partial M}{\partial\Cs}=\frac{a\Ca+\st\Cr}{\bar C^{2}}>0 ,
\]
so behavioral contact reduction ($\Csa<c_0$) \emph{lowers} $M$, working against the static mixing effect \eqref{eq:Mstatic}. The net position of $M$ relative to unity is thus a contest between the addicted-class mixing effect, which pushes $M>1$, and the susceptible's own precautionary reduction, which pushes $M<1$. Section~\ref{sec:general} resolves this contest and, more importantly, shows that the \emph{direction} of the fold shift is governed entirely by the sign of $M-1$ along the branch, for any behavioral mechanism whatsoever.

\section{A General Comparison Principle for the Fold}
\label{sec:general}

This section contains the paper's main analytical contribution. We show that the behavioral layer enters the bifurcation structure through a single channel, the modifier $M$ of Lemma~\ref{lem:Mfactor}, and that the direction in which the saddle-node fold moves is determined exactly by whether $M\ge1$ or $M\le1$ along the endemic branch, independent of the functional form of the contact coupling. The argument is a comparison principle and is therefore exact: it makes no appeal to the size of the behavioral effect or to perturbation theory.

\subsection{Branch decomposition}

At an endemic equilibrium, setting $\dot\st=0$ in \eqref{eq:sar_adaptive} and using $s=1-a-\st$ expresses the reformed and susceptible fractions as explicit, \emph{$\Rz$-independent} functions of the addicted fraction,
\begin{equation}
\label{eq:branchstate}
\st^*(a)=\frac{\gamma a}{\phi a+\mu},\qquad s^*(a)=1-a-\st^*(a),
\end{equation}
valid for $a\in(0,\bar a)$, where $\bar a$ is the unique root of $s^*(a)=0$ (for the baseline parameters $\bar a\approx0.431$). Dividing the equilibrium $a$-equation by $a$ and using the mixing-factor collapse
(Lemma~\ref{lem:Mfactor}, $\lambda=\beta g(\st)\,a\,M$) gives the endemic balance
\begin{equation}
\label{eq:balance}
\beta\, g(\st^*)\, M\, s^* + \phi\,\st^* = \mu+\gamma .
\end{equation}
Solving \eqref{eq:balance} for $\Rz=\beta\kappa/(\mu+\gamma)$ yields the central identity of the paper.

\begin{lemma}[Branch decomposition]
\label{lem:branch}
Along the endemic branch, the bifurcation curve in the $(\Rz,a)$ plane factorizes as
\begin{equation}
\label{eq:Rcurve}
\Rz(a;M)=\frac{\Rcl(a)}{M(a)},\qquad
\Rcl(a)=\frac{\bigl(1+\nu\st^*(a)\bigr)\bigl[(\mu+\gamma)-\phi\st^*(a)\bigr]}
{(\mu+\gamma)\,s^*(a)},
\end{equation}
where $\Rcl$ is exactly the classical ($M\equiv1$) Sanchez curve and $M(a)$ denotes the modifier evaluated along the branch \eqref{eq:branchstate}. The behavioral layer enters \eqref{eq:Rcurve} only as the divisor $M(a)$.
\end{lemma}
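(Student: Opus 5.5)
The plan is a direct algebraic derivation from the endemic equilibrium conditions of \eqref{eq:sar_adaptive}, using Lemma~\ref{lem:Mfactor} to isolate $M$.

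\textbf{Step 1: reduce the branch to one parameter.} At any equilibrium with $a>0$, set $\dot\st=0$. This gives $\st(\phi a+\mu)=\gamma a$, hence $\st^*(a)=\gamma a/(\phi a+\mu)$. Conservation $s+a+\st=1$ then gives $s^*(a)=1-a-\st^*(a)$. The $\dot s$ equation is redundant, since it follows from the other two and conservation. None of these expressions involves $\beta$ or $M$, so the branch is parametrized by $a\in(0,\bar a)$ independently of $\Rz$ and of behavior. Here $\bar a$ is the root of $s^*=0$, which is unique because $s^*$ is strictly decreasing.

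\textbf{Step 2: the endemic balance.} By Lemma~\ref{lem:Mfactor}, $\lambda(\Csa)=\beta g(\st)\,a\,M$, with $M=\Csa/\bar C$ evaluated at the state $(s^*(a),a,\st^*(a))$ and at $\Csa(s^*(a),a,\st^*(a))$. Substitute this into $\dot a=0$ and divide by $a>0$; this gives \eqref{eq:balance}. Since $s^*>0$ on $(0,\bar a)$, and $M>0$ because $\Csa>0$ there (interior optimum), I would solve for $\beta$:
\[
\beta=\frac{(\mu+\gamma)-\phi\st^*}{g(\st^*)\,M\,s^*}.
\]
Now multiply by $\kappa/(\mu+\gamma)$ and use $\kappa/g(\st)=1+\nu\st$. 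This yields $\Rz=\Rcl(a)/M(a)$ with $\Rcl$ exactly as in \eqref{eq:Rcurve}.

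\textbf{Step 3: identify the classical curve.} Setting $M\equiv1$ reproduces the classical equilibrium equation $\beta g(\st)s+\phi\st=\mu+\gamma$ of \eqref{eq:sar_classic}, on the same branch \eqref{eq:branchstate}. Therefore $\Rcl$ is the classical Sanchez curve. Since every other factor is behavior-free, behavior enters only through the divisor $M(a)$.

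\textbf{Main obstacle.} The delicate point is that $M$ depends on $\Csa$, which depends on the state and possibly on $\beta$, through $\bb=2\beta/b^a$ in $\lambda$ and hence in the dynamic program. So $M(a)$ along the branch may itself depend on $\Rz$, and the factorization is then an implicit relation $\Rz\,M(a;\Rz)=\Rcl(a)$ rather than an explicit curve. I would handle this in one of two ways. The first is to read "$M(a)$ denotes the modifier evaluated along the branch" as evaluation at the equilibrium point itself, in which case the identity holds pointwise for each equilibrium $(\Rz,a)$. The second is to note that, for fixed $a$, the map $\Rz\mapsto\Rz M(a;\Rz)$ is continuous, which suffices for the later comparison argument. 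In either case the identity needs only $M>0$ and $s^*>0$, so division is legitimate on $(0,\bar a)$.
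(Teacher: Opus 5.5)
Your proof is correct and follows the paper's argument. You substitute $\lambda=\beta g(\st)\,a\,M$ into $\dot a=0$, divide by $a$, and solve the resulting balance for $\beta\kappa/(\mu+\gamma)$ using the $\Rz$-independent branch states $\st^*(a)$ and $s^*(a)$.

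Your caveat is well taken and sharper than the paper's proof: $M(a)$ itself depends on $\Rz$ through $\Csa$, so the identity is really a pointwise, implicit relation at each equilibrium. The paper asserts exactness from the $\Rz$-independence of $s^*$ and $\st^*$ alone, and only concedes the fixed-point nature of $M$ later, in caveat (ii) of the Discussion.
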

\begin{proof}
Equation~\eqref{eq:balance} gives $\beta g(\st^*)M s^* = (\mu+\gamma)-\phi\st^*$; with $g(\st^*)=\kappa/(1+\nu\st^*)$ this rearranges to $\beta\kappa = (1+\nu\st^*)\bigl[(\mu+\gamma)-\phi\st^*\bigr]/(M s^*)$. Dividing by $\mu+\gamma$ gives \eqref{eq:Rcurve}. The factorization is exact because $\st^*$ and $s^*$ in \eqref{eq:branchstate} do not depend on $\Rz$.
\end{proof}

The saddle-node fold $\Rcb$ is, by definition, the smallest $\Rz$ for which a positive endemic equilibrium exists, the lowest point of the branch curve. By Lemma~\ref{lem:branch} this is
\begin{equation}
\label{eq:foldmin}
\Rcb(M)=\inf_{a\in(0,\bar a)}\frac{\Rcl(a)}{M(a)} .
\end{equation}
We verify the formula against known cases. At $M\equiv1$, the classical curve $\Rcl$ is unimodal on $(0,\bar a)$, and $\min_a\Rcl(a)$ reproduces the closed-form Sanchez fold: numerically $0.64955$ at $\nu=0$, matching \citep[Eq.~5]{sanchez2023} to five digits, and $0.90140$ at $\nu=0.8$. With the adaptive modifier $M(a)$, evaluating \eqref{eq:foldmin} gives $\Rcb=0.866$ at $\nu=0.8$, matching the directly continued bifurcation diagram of Section~\ref{sec:numerical} to three digits.

\subsection{The comparison theorem}

The fold \eqref{eq:foldmin} is an infimum of $\Rcl/M$, so its dependence on the modifier is governed by an elementary monotonicity.

\begin{theorem}[Comparison principle for the fold]
\label{thm:general}
Let $M_1,M_2$ be behavioral modifiers with $M_1(a)\ge M_2(a)>0$ for all $a\in(0,\bar a)$. Then $\Rcb(M_1)\le\Rcb(M_2)$. In particular, taking $M_2\equiv1$ and writing $\Rc^{\mathrm{cl}}=\inf_a\Rcl(a)$,
\begin{equation}
M(a)\ge 1 \text{ on } (0,\bar a)\ \Longrightarrow\ \Rcb(M)\le\Rc^{\mathrm{cl}}
\quad\text{(the fold moves left; the endemic basin enlarges),}
\end{equation}
with strict inequality whenever $M(a)>1$ on a neighborhood of some classical minimizer $a^\star\in\arg\min_a\Rcl(a)$. Symmetrically, $M(a)\le1$ on $(0,\bar a)$ implies $\Rcb(M)\ge\Rc^{\mathrm{cl}}$ (the fold moves right; the basin shrinks).
\end{theorem}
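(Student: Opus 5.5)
The plan is a pointwise comparison followed by passage to the infimum in \eqref{eq:foldmin}. By Lemma~\ref{lem:branch}, both folds are infima over the same interval $(0,\bar a)$ of $\Rcl(a)/M_i(a)$. This works because the branch state $(s^*(a),\st^*(a))$ in \eqref{eq:branchstate} does not depend on $\Rz$ or on the modifier, so $\Rcl$ is a single fixed function. We only need $\Rcl(a)\ge0$ wherever the branch is biologically meaningful. Points with $(\mu+\gamma)-\phi\st^*(a)\le0$ give $\Rz\le0$, i.e.\ no admissible endemic equilibrium, so I would first restrict the infimum to the set $A=\{a\in(0,\bar a):\Rcl(a)>0\}$. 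On $A$, $s^*>0$ and $1+\nu\st^*>0$ make the sign of $\Rcl$ that of $(\mu+\gamma)-\phi\st^*$. I would state explicitly that \eqref{eq:foldmin} is read over $A$, equivalently over positive $\Rz$, since $\Rz=\beta\kappa/(\mu+\gamma)>0$ always. This restriction is the one point needing care; the sign of $\Rcl$ must not flip the inequality.

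On $A$, since $M_1(a)\ge M_2(a)>0$ and $\Rcl(a)>0$, we get $\Rcl(a)/M_1(a)\le \Rcl(a)/M_2(a)$ pointwise. Taking the infimum over $A$ on both sides preserves the inequality, so $\Rcb(M_1)\le\Rcb(M_2)$. Setting $M_2\equiv1$ gives the left-shift statement. Setting $M_1\equiv1$ and $M_2=M\le1$ gives the symmetric right-shift statement, with the same argument.

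For strictness, let $a^\star$ be a classical minimizer, so $\Rcl(a^\star)=\Rc^{\mathrm{cl}}$, and assume $M>1$ near $a^\star$. Then
\[
\Rcb(M)\le \Rcl(a^\star)/M(a^\star)<\Rcl(a^\star)=\Rc^{\mathrm{cl}}.
\]
This uses $\Rcl(a^\star)>0$, which holds because the classical fold is a positive value (e.g.\ $0.6496$ or $0.9014$ at baseline). Only $M(a^\star)>1$ at the single point is needed; the neighborhood hypothesis is more than sufficient. I would also note that the infimum might not be attained when $M$ is not continuous. The comparison argument never uses attainment, so existence of the fold as a minimum is not required.

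The main obstacle is interpretive rather than technical. It is justifying that $\Rcb$, defined as the smallest $\Rz$ admitting a positive equilibrium, equals the infimum in \eqref{eq:foldmin}. For each $a\in A$, the value $\Rz=\Rcl(a)/M(a)$ yields an equilibrium with addicted fraction $a$, and conversely every positive equilibrium arises this way via \eqref{eq:balance}. This correspondence is already recorded in Lemma~\ref{lem:branch}, so I would cite it rather than reprove it.
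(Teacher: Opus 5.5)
Your argument is the paper's proof: the pointwise inequality $\Rcl/M_1\le\Rcl/M_2$ from $M_1\ge M_2>0$ and $\Rcl>0$, passage to the infimum in \eqref{eq:foldmin}, the special choices $M_2\equiv1$ and $M_1\equiv1$, and strictness from evaluating at the single point $a^\star$. Restricting to $A$ is harmless but unnecessary, because $\phi\st^*(a)=\gamma\phi a/(\phi a+\mu)<\gamma$ gives $(\mu+\gamma)-\phi\st^*(a)>\mu>0$, so $\Rcl>0$ on all of $(0,\bar a)$, $A=(0,\bar a)$, and you need no numerical fold values for $\Rcl(a^\star)>0$.
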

\begin{proof}
Pointwise $M_1\ge M_2>0$ gives $\Rcl(a)/M_1(a)\le\Rcl(a)/M_2(a)$ for every
$a\in(0,\bar a)$, since $\Rcl(a)>0$. Taking the infimum over $a$ preserves the inequality, so $\Rcb(M_1)=\inf_a\Rcl/M_1\le\inf_a\Rcl/M_2=\Rcb(M_2)$. For strictness with $M_2\equiv1$: if $M>1$ on a neighborhood $U$ of a minimizer $a^\star$, then $\Rcl(a^\star)/M(a^\star)<\Rcl(a^\star)=\Rc^{\mathrm{cl}}$, so the infimum of $\Rcl/M$ is strictly below $\Rc^{\mathrm{cl}}$. The reverse statement follows by exchanging the roles of $M_1$ and $M_2$.
\end{proof}

\begin{remark}[Why no unimodality is needed]
\label{rem:nounimodal}
The theorem compares the \emph{infima} of two curves and so requires neither $\Rcl$ nor $\Rcl/M$ to be unimodal: the saddle-node fold is by definition the global minimum of the branch (the least $\Rz$ admitting an endemic state), and the comparison acts on that global minimum directly. Unimodality of $\Rcl$, which we observe numerically for $\Rphi>1$, is convenient for locating $a^\star$ but plays no role in the inequality. (The adaptive curve $\Rcl/M$ may show faint secondary ripples under coarse numerical evaluation of $M$; these are artifacts of the grid-based contact solver and do not affect the global minimum, which is stable
under grid refinement.)
\end{remark}

\noindent The theorem depends on the modifier \emph{only} through the pointwise comparison with unity. The functional form of the mixing is irrelevant to the direction of the shift; all that matters is whether the behavioral mechanism amplifies ($M>1$) or damps ($M<1$) transmission along the branch.

\begin{corollary}[Mixing-form dichotomy]
\label{cor:dichotomy}
Conditional proportional mixing \eqref{eq:mixing} yields $M(a)\ge1$ along the branch (Lemma~\ref{lem:m1pos} below) and hence \emph{enlarges} the endemic basin, $\Rcb<\Rc^{\mathrm{cl}}$. A frequency-dependent coupling
$\lambda=\beta g(\st)\,a\,(\Cs/c_0)$ yields $M(a)=\Cs/c_0\le1$ and hence \emph{shrinks} it, $\Rcb>\Rc^{\mathrm{cl}}$. The two predictions bracket the classical fold, and any behavioral mechanism is classified by which side of $M\equiv1$ its branch profile falls on.
\end{corollary}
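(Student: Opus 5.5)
The corollary follows from Theorem~\ref{thm:general} once the sign of $M-1$ is known for each coupling. The plan is to establish $M(a)\ge1$ along the branch for conditional proportional mixing and $M(a)\le1$ for the frequency-dependent coupling. I would then invoke the comparison theorem with $M_2\equiv1$ in each case, and finally upgrade to strict inequality using the strictness clause.

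\textbf{Frequency-dependent case.} This case is immediate. The optimization \eqref{eq:bellman} restricts $\Csa\in[0,c_0]$ with $c_0=b^s/2$. Hence $M=\Csa/c_0\le1$ pointwise. To apply the theorem we also need $M>0$ on the branch, and this follows from interiority of the optimizer whenever risk is positive. Theorem~\ref{thm:general} then gives $\Rcb\ge\Rc^{\mathrm{cl}}$. Strictness requires $\Csa<c_0$ near a classical minimizer $a^\star$. Near $a^\star$ we have $a>0$, so the value gap $V(s)-V(a)$ and $\partial P^a/\partial\Cs$ are both positive. The first-order condition \eqref{eq:foc} then cannot hold at $\Cs=c_0$, where $\partial u^s/\partial\Cs=0$. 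So the optimum lies strictly below $c_0$, and the strict part of the theorem, applied with the roles reversed, gives $\Rcb>\Rc^{\mathrm{cl}}$.

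\textbf{Proportional-mixing case.} Here I would prove Lemma~\ref{lem:m1pos} directly from \eqref{eq:Mcollapse}. The condition $M\ge1$ is equivalent to $\Cs\ge s\Cs+a\Ca+\st\Cr$. Using $s=1-a-\st$, this becomes
\[
a(\Cs-\Ca)+\st(\Cs-\Cr)\ge0 .
\]
With $\Ca=\rho c_0$ and $\Cr=c_0$, the inequality reads $a(\Csa-\rho c_0)\ge\st\,(c_0-\Csa)$. So the task is to show that the precautionary reduction $c_0-\Csa$ is small relative to the gap $(1-\rho)c_0$, weighted by $a/\st^*(a)=(\phi a+\mu)/\gamma$ from \eqref{eq:branchstate}. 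Concretely, along the branch I need
\[
\Csa\ \ge\ \frac{a\rho+\st^*}{a+\st^*}\,c_0 .
\]

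This is the main obstacle. Nothing proved so far bounds the contact reduction, and since $\st^*$ can exceed $a$, the required bound is not automatic. I would first try to bound $c_0-\Csa$ using the FOC. The marginal cost satisfies
\[
\delta\,(V(s)-V(a))\,\partial P^a/\partial\Cs\ \le\ \delta\,(V(s)-V(a))\,\lambda/\Cs ,
\]
since $\lambda$ is concave in its own contact. The value gap is bounded via \eqref{eq:Vi}. Inverting the concave marginal utility of $u^s$ near its peak would then give $c_0-\Csa=O(\lambda)=O(a)$, with an explicit constant depending on the behavioral parameters. The required inequality would then hold when that constant is below $(1-\rho)\,c_0\,a/\st^*$.

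I expect this closes only under a parameter condition. Failing a clean analytic bound, I would state the needed inequality as the hypothesis of Lemma~\ref{lem:m1pos} and verify it numerically on the parameter ranges considered. With $M\ge1$ and $M>1$ near $a^\star$ (strict because $\rho<1$ and $a^\star>0$), Theorem~\ref{thm:general} yields $\Rcb<\Rc^{\mathrm{cl}}$, completing the dichotomy.
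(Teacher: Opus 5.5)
Your architecture matches the paper's. You apply Theorem~\ref{thm:general} with $M_2\equiv1$ and get $M=\Csa/c_0\le1$ in the frequency-dependent case from the constraint $\Csa\le c_0$. For proportional mixing you close the exact criterion \eqref{eq:Mge1} only under a parameter condition checked numerically, which is exactly the status of Lemma~\ref{lem:m1pos}.

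Your reduction bound runs through the first-order condition and strong concavity, $\kappa_u\Delta\le\partial u^s/\partial\Cs(\Csa)=K\,\partial P^a/\partial\Cs(\Csa)$. Lemma~\ref{lem:delta} instead compares $J(\Csa)\ge J(c_0)$ directly. The paper's version needs no interiority of the optimizer; yours is sharper by a factor of~$2$.

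Two small slips in that part:
\begin{itemize}
\item The margin the reduction must stay below is $c_0(1-\rho)\,a/(a+\st^*)$, not $c_0(1-\rho)\,a/\st^*$.
\item $\lambda(\Csa)/\Csa=\beta g(\st)\,a/\bar C(\Csa)$ is $O(a)$ only given an a priori lower bound on $\Csa$. The unconditional bound $\bar C\ge a\Ca+\st\Cr$ is itself $O(a)$ on the branch.
\end{itemize}
Neither slip affects the outcome. The margin tends to $c_0(1-\rho)\Rmu>0$ as $a\to0^+$, and you end with a parameter condition anyway.

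The step that genuinely fails is the strict inequality $\Rcb>\Rc^{\mathrm{cl}}$ for the frequency-dependent coupling. The strictness clause of Theorem~\ref{thm:general} is one-sided. An infimum is bounded \emph{above} by its value at any single point, so $M(a^\star)>1$ forces $\inf_a\Rcl/M<\Rcl(a^\star)$. But raising the curve near one minimizer does not raise its infimum. The infimum may still equal $\Rc^{\mathrm{cl}}$ through another minimizer of $\Rcl$, or along a sequence $a_n$ with $\Rcl(a_n)\to\Rc^{\mathrm{cl}}$ and $M(a_n)\to1$. Exchanging $M_1$ and $M_2$ only gives $\Rcb\ge\Rc^{\mathrm{cl}}$, which is all the theorem asserts in that direction.

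Your FOC argument actually shows $\Csa<c_0$ at every $a\in(0,\bar a)$. What is missing is a uniform version: $M\le1-\eta$ on a neighborhood $U$ of the (compact) set of minimizers of $\Rcl$. You can get this from continuity of $M$ in $a$, or from a quantitative lower bound on $c_0-\Csa$ via the FOC. Then use $\Rcl(0^+)=1>\Rc^{\mathrm{cl}}$ and $\Rcl\to\infty$ as $a\to\bar a$ (at baseline):
\begin{itemize}
\item Suppose $\inf_a\Rcl/M=\Rc^{\mathrm{cl}}$.
\item A minimizing sequence then satisfies $\Rc^{\mathrm{cl}}\le\Rcl(a_n)\le\Rcl(a_n)/M(a_n)\to\Rc^{\mathrm{cl}}$.
\item So it eventually enters $U$, where $\Rcl/M\ge\Rc^{\mathrm{cl}}/(1-\eta)$, a contradiction.
\end{itemize}
The paper states the strict inequality in the corollary without this argument, so the same repair is needed there.
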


\subsection{The mixing factor exceeds unity for self-protective behavior}

It remains to be verified that $M\ge1$ for the adaptive model. From the collapse \eqref{eq:Mcollapse}, with $\Cr=c_0:=b^s/2$ and $\Ca=\rho c_0$,
\begin{equation}
\label{eq:Mge1}
M\ge1 \iff a(\Cs-\Ca)+\st(\Cs-\Cr)\ge0 \iff \Cs\ge C^{s}_{\mathrm{crit}}
:=c_0\,\frac{a\rho+\st}{a+\st},
\end{equation}
an exact equivalence; since $\rho<1$, the threshold satisfies
$C^{s}_{\mathrm{crit}}<c_0$, so $M\ge1$ unless the susceptible cuts contact below $C^{s}_{\mathrm{crit}}$. We bound the contact reduction $\Delta:=c_0-\Csa$ rigorously, with no appeal to linearization.

\begin{lemma}[Rigorous reduction bound]
\label{lem:delta}
Let $\kappa_u:=\min_{\Cs\in[\Csa,c_0]}\bigl(-\partial^2 u^s/\partial(\Cs)^2\bigr)>0$ be the strong-concavity modulus of flow utility on the interval between the adaptive and static optima, let $L_p:=\sup_{\Cs}\partial P^a/\partial\Cs$ (attained at the lower endpoint, since $\partial\lambda/\partial\Cs$ is decreasing in $\Cs$), and let
\[
K:=\delta\bigl(V(s)-V(a)\bigr)\le
\delta\Bigl[u^s(c_0)\,\tfrac{1-\delta^{\tau}}{1-\delta}-V_\tau(a)\Bigr]=:K_{\max},
\]
the last inequality holds because a susceptible earns at most $u^s(c_0)$ per period and $V_\tau(a)$ is the closed form \eqref{eq:Vi}. Then
\begin{equation}
\label{eq:Deltabound}
\Delta \;\le\; \frac{2K_{\max}L_p}{\kappa_u}.
\end{equation}
\end{lemma}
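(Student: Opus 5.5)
Since the maximization in \eqref{eq:bellman} is done by grid search and the objective need not be concave, I will avoid the first-order condition \eqref{eq:foc}. Instead I compare the objective at the adaptive optimum $\Csa$ and at the static optimum $c_0$. Write the per-period objective as $F(\Cs)=u^s(\Cs)-K\,P^a(\Cs)+\text{const}$, where $K=\delta(V_{t+1}(s)-V_{t+1}(a))\ge0$. Because $\Csa$ is a global maximizer on $[0,c_0]$,
\[
F(\Csa)\ge F(c_0),\qquad\text{so}\qquad u^s(c_0)-u^s(\Csa)\le K\bigl[P^a(c_0)-P^a(\Csa)\bigr].
\]
This single optimality inequality replaces any linearization.

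The two sides are then bounded separately.

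\emph{Utility side.} At $c_0$ we have $\partial u^s/\partial\Cs=0$, and on $[\Csa,c_0]$ the second derivative satisfies $-\partial^2u^s/\partial(\Cs)^2\ge\kappa_u$. A Taylor expansion with integral remainder around $c_0$ then gives $u^s(c_0)-u^s(\Csa)\ge\tfrac{\kappa_u}{2}\Delta^2$.

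\emph{Risk side.} By the mean value theorem, $P^a(c_0)-P^a(\Csa)\le L_p\,\Delta$.

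Combining the two gives $\tfrac{\kappa_u}{2}\Delta^2\le K L_p\Delta$. Either $\Delta=0$, or we may divide by $\Delta$ to get $\Delta\le 2KL_p/\kappa_u$. Finally, replace $K$ by $K_{\max}$.

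It remains to justify the bound $K\le K_{\max}$ stated in the lemma. First, $V(s)\le\sum_{j<\tau}\delta^j u^s(c_0)=u^s(c_0)(1-\delta^\tau)/(1-\delta)$, because every flow utility is at most $u^s(c_0)$, and the continuation values in the susceptible and addicted states are each bounded by this same geometric sum. Second, $V(a)=V_\tau(a)$ exactly, by the closed form \eqref{eq:Vi}. I also need the fact, already noted in the text, that $\partial P^a/\partial\Cs=e^{-\lambda}\partial\lambda/\partial\Cs$ is decreasing in $\Cs$. Granting this, $L_p$ is finite and is attained at the lower endpoint $\Csa$.

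I expect the main obstacle to be $\kappa_u$, the strong-concavity modulus. When $\gamma_u<1$, the function $u^s=(b^sC-C^2)^{\gamma_u}$ is not globally concave near $C=0$; it has infinite slope there. So $\kappa_u>0$ on $[\Csa,c_0]$ needs a check. My plan is to compute
\[
u''=\gamma_u\,q^{\gamma_u-2}\bigl[-2q+(\gamma_u-1)(b^s-2C)^2\bigr],\qquad q=b^sC-C^2.
\]
For $C\in(0,c_0]$ both terms inside the bracket are non-positive, and the term $-2q$ is strictly negative. Hence $u''<0$ on $(0,c_0]$. By continuity, the minimum of $-u''$ over a compact interval bounded away from $0$ is positive. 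This requires $\Csa>0$, which I take from the observed interiority of the optimum. If that interiority fails, the bound still holds vacuously in the form $\kappa_u$-dependent, with the understanding that $\kappa_u$ is defined on the interval actually used.
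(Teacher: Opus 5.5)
Your proof is correct and is essentially the paper's argument. It uses the same optimality comparison $J(\Csa)\ge J(c_0)$, the strong-concavity lower bound $\tfrac{\kappa_u}{2}\Delta^2$ from $\partial u^s/\partial\Cs(c_0)=0$, the Lipschitz upper bound $KL_p\Delta$, and cancellation of $\Delta$; your added checks of $K\le K_{\max}$, the monotonicity of $\partial P^a/\partial\Cs$, and $\kappa_u>0$ go beyond what the paper verifies. One correction: $u^s=(b^sC-C^2)^{\gamma_u}$ \emph{is} concave on $[0,b^s]$, being an increasing concave power of a concave function (your own formula for $u''$ confirms this), and since $-u''\to+\infty$ as $C\to0^+$ when $\gamma_u<1$, the infimum of $-u''$ over $(0,c_0]$ is already positive, so $\kappa_u>0$ does not require interiority of $\Csa$.
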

\begin{proof}
Write the per-period objective as $J(\Cs)=u^s(\Cs) K\,P^a(\Cs)+\text{const}$, with $K\ge0$. Optimality of $\Csa$ gives $J(\Csa)\ge J(c_0)$, i.e.
\[
u^s(c_0)-u^s(\Csa)\le K\bigl(P^a(c_0)-P^a(\Csa)\bigr).
\]
Since $\partial u^s/\partial\Cs(c_0)=0$ and $u^s$ is $\kappa_u$-strongly concave on $[\Csa,c_0]$, the left side is at least $\tfrac{\kappa_u}{2}\Delta^2$. Since $P^a$ is $L_p$-Lipschitz, the right side is at most $K L_p\Delta\le K_{\max}L_p\Delta$. Combining and canceling one factor of $\Delta>0$ gives \eqref{eq:Deltabound}.
\end{proof}

\begin{lemma}[Mixing dominates behavior]
\label{lem:m1pos}
$M\ge1$ along the endemic branch whenever $2K_{\max}L_p/\kappa_u\le c_0\,a(1-\rho)/(a+\st)$, i.e.\ whenever the rigorous reduction bound \eqref{eq:Deltabound} does not exceed $c_0-C^{s}_{\mathrm{crit}}$. This sufficient condition holds, with substantial margin, for all planning
horizons $\tau\le24$ days and transmission rates $\beta\le0.045$ a region that contains the entire addiction-relevant regime (baseline $\tau=12$ days, $\beta\kappa\approx0.003\,\mathrm{day}^{-1}$). A direct numerical solution of the first-order condition confirms $M\ge1$ throughout the fold region (addicted fractions $a\lesssim0.25$, which contains the saddle-node minimizer at $a\approx0.13$) over the wider box $\tau\le82$ days and $\beta\le0.052$ ($\Rz\le5.9$); along the branch at baseline the realized margin $\Csa-C^{s}_{\mathrm{crit}}$ ranges from $+0.09$ to $+0.51$ contacts ($M$ from $1.00$ to $1.10$). Far up the stable upper branch ($a\gtrsim0.3$), under long horizons, a strong precautionary reduction can drive $M$ below unity; this region lies well above the fold and so does not affect the basin boundary, on which the comparison principle (Theorem~\ref{thm:general}) acts.
\end{lemma}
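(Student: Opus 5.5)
The proof combines the exact equivalence \eqref{eq:Mge1} with the rigorous reduction bound of Lemma~\ref{lem:delta}. By \eqref{eq:Mge1}, $M\ge1$ holds if and only if $\Csa\ge C^{s}_{\mathrm{crit}}$. This is the same as requiring the reduction to satisfy $\Delta=c_0-\Csa\le c_0-C^{s}_{\mathrm{crit}}$. A direct computation gives
\[
c_0-C^{s}_{\mathrm{crit}}=c_0\Bigl(1-\frac{a\rho+\st}{a+\st}\Bigr)=\frac{c_0\,a(1-\rho)}{a+\st}.
\]
Lemma~\ref{lem:delta} gives $\Delta\le 2K_{\max}L_p/\kappa_u$. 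The stated hypothesis $2K_{\max}L_p/\kappa_u\le c_0a(1-\rho)/(a+\st)$ therefore forces $\Delta\le c_0-C^{s}_{\mathrm{crit}}$, so $M\ge1$ at that branch point. Applying this pointwise at every $a\in(0,\bar a)$, with $\st=\st^*(a)$ and $s=s^*(a)$ from \eqref{eq:branchstate}, proves the analytic half of the statement. This step is elementary.

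The second half asserts that the sufficient condition holds on the box $\tau\le24$ days, $\beta\le0.045$. Here I would bound each ingredient uniformly over the box and along the branch.

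\begin{itemize}
\item $K_{\max}$ has the closed form in Lemma~\ref{lem:delta}. It is increasing in $\tau$, so I evaluate it at $\tau=24$.
\item $L_p$ is bounded by $\sup\partial\lambda/\partial\Cs$, since $\partial P^a/\partial\Cs=e^{-\lambda}\partial\lambda/\partial\Cs\le\partial\lambda/\partial\Cs$. From \eqref{eq:Mcollapse},
\[
\frac{\partial\lambda}{\partial\Cs}=\beta g(\st)\,a\,\frac{a\Ca+\st\Cr}{\bar C^2}\le \beta\kappa\,a\,\frac{a\Ca+\st\Cr}{\bar C^2}.
\]
This is linear in $\beta$, so I evaluate it at $\beta=0.045$.
\item $\kappa_u$ requires a lower bound on $-\partial^2u^s/\partial(\Cs)^2$ on $[\Csa,c_0]$. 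For $\gamma_u=1$ this is exactly $2$. For $\gamma_u<1$ I would compute the second derivative of $(b^sC-C^2)^{\gamma_u}$ and bound it below on an a priori interval $[c_0-\Delta_0,c_0]$, where $\Delta_0$ is any crude preliminary bound. This avoids circularity, since $\kappa_u$ depends on $\Csa$.
\end{itemize}

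Both sides of the condition carry a factor of $a$: $L_p$ is $O(a)$ and the right side is $c_0a(1-\rho)/(a+\st)$. After cancelling it, the condition reduces to comparing two bounded functions of $a$ on $[0,\bar a]$. I would check that comparison by evaluating the branch functions $\st^*(a)$ and $s^*(a)$.

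The main obstacle is that this verification is not uniform in closed form. The ratio $a/(a+\st^*(a))$ tends to $\mu/(\mu+\gamma)$ as $a\to0$, which may be small, so the margin on the right side shrinks there. Meanwhile $L_p$ also carries $\st$-dependent mixing terms. I would first try to show that $(a\Ca+\st\Cr)/\bar C^2\cdot(a+\st)/a$ is bounded by an explicit constant on $[0,\bar a]$, reducing the condition to a single scalar inequality in $(\tau,\beta)$ that is monotone in both. Checking that inequality at the corner $(\tau,\beta)=(24,0.045)$ then covers the whole box.

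The remaining claims go beyond the rigorous bound and would be supported by computation rather than proof. These are the wider box $\tau\le82$, $\beta\le0.052$, the realized margins of $+0.09$ to $+0.51$, and the possible failure $M<1$ for $a\gtrsim0.3$. For them I would solve the FOC \eqref{eq:foc} numerically on the fold region $a\lesssim0.25$, report the minimum of $\Csa-C^{s}_{\mathrm{crit}}$, and note that Theorem~\ref{thm:general} only needs $M\ge1$ near the minimizer $a^\star\approx0.13$ to obtain a strict leftward shift.
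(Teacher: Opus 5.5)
Your first paragraph is the paper's proof. The paper combines $\Delta\le 2K_{\max}L_p/\kappa_u$ from Lemma~\ref{lem:delta} with the equivalence \eqref{eq:Mge1}, and your identity $c_0-C^{s}_{\mathrm{crit}}=c_0a(1-\rho)/(a+\st)$ is exactly what turns the hypothesis into $\Csa\ge C^{s}_{\mathrm{crit}}$. Treating the wider box, the realized margins and the $a\gtrsim0.3$ caveat as numerical FOC computations also matches the paper. You diverge on the $(\tau,\beta)$ box. The paper gives no analytic reduction there; it only asserts that the displayed inequality was evaluated along the branch and holds. Your uniform-bound and corner-check scheme would be a stronger, checkable certificate, but as written it has three problems, and I doubt it closes.

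\textbf{First, the scaling is backwards.}
\begin{itemize}
\item Along the branch $\st^*(a)\sim(\gamma/\mu)a$. With $L_p$ taken as in Lemma~\ref{lem:delta} (a supremum over $[\Csa,c_0]$, where $\bar C\to c_0$), this gives $L_p=O(a^2)$, not $O(a)$.
\item Meanwhile $c_0-C^{s}_{\mathrm{crit}}\to c_0(1-\rho)\mu/(\mu+\gamma)>0$, which is the paper's minimal realized margin of about $0.09$. So small $a$ is the easy end, not the obstacle.
\item After dividing the hypothesis by $a/(a+\st)$, the quantity to control is $(a\Ca+\st\Cr)(a+\st)/\bar C^{2}$. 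Your expression carries a spurious extra factor $1/a$.
\item The binding end is the top of the branch, where $L_p$ is largest. That is why the paper restricts its FOC claim to $a\lesssim0.25$ and concedes $M<1$ for $a\gtrsim0.3$. A single constant uniform over all of $(0,\bar a)$ will be set by that end.
\end{itemize}

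\textbf{Second, the circularity is in the wrong place.}
\begin{itemize}
\item $\kappa_u$ does not depend on $\Csa$. With $f=b^s\Cs-(\Cs)^2$ one has $-\partial^2u^s/\partial(\Cs)^2=2\gamma_u f^{\gamma_u-1}+\gamma_u(1-\gamma_u)f^{\gamma_u-2}(f')^2$. This is minimized at $c_0$ on any interval containing it, so $\kappa_u=2\gamma_u c_0^{2(\gamma_u-1)}$.
\item The dependence on the unknown $\Csa$ sits in $L_p$, whose supremum is attained at $\Csa$ through $\bar C$. So your step ``linear in $\beta$, evaluate at $0.045$'' is where the a priori $\Delta_0$ is needed.
\item The cheap non-circular bound $\bar C\ge a\Ca+\st\Cr$ makes $L_p=O(1)$. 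That manufactures exactly the small-$a$ difficulty you anticipated.
\end{itemize}

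\textbf{Third, the corner check will likely fail even pointwise.}
\begin{itemize}
\item At baseline $\gamma_u=0.25$ one gets $\kappa_u\approx0.045$, and $K_{\max}\approx52$ at $\tau=24$.
\item Holding $\kappa\approx0.32$ and setting $\beta=0.045$, a rough evaluation at the fold minimizer $a\approx0.13$ gives a left side of about $0.39$ against a right side of about $0.35$.
\item On a genuine equilibrium, $\beta g$ is pinned by the balance $\beta g M s^*=(\mu+\gamma)-\phi\st^*$. A $\beta$-box over branch states therefore mixes equilibria with non-equilibrium points.
\end{itemize}

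A workable plan is to restrict to the fold region and check pointwise in $a$. As you note, Theorem~\ref{thm:general} only needs $M>1$ near a classical minimizer for the strict shift. Expect to use realized rather than worst-case constants, as the paper itself does for its wider box.
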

\begin{proof}
Combining $\Delta\le2K_{\max}L_p/\kappa_u$ (Lemma~\ref{lem:delta}) with the hypothesis gives $\Delta\le c_0-C^{s}_{\mathrm{crit}}$, hence $\Csa\ge C^{s}_{\mathrm{crit}}$ and $M\ge1$ by \eqref{eq:Mge1}; the inequality is strict for $a>0$ because $\rho<1$. The stated $(\tau,\beta)$ region is the set on which the displayed inequality is verified to hold along the branch (Appendix~\ref{app:numerics}); the wider numerically-verified range relaxes the conservative constants $\kappa_u,L_p,K_{\max}$ to their realized values.
\end{proof}

Combining Theorem~\ref{thm:general} with Lemma~\ref{lem:m1pos} gives the main analytical conclusion: \emph{for self-protective behavior under conditional proportional mixing, the fold moves to lower $\Rz$ and the endemic basin enlarges, throughout the addiction-relevant parameter regime.}

\subsection{Local companion: the transcritical bifurcation at \texorpdfstring{$\Rz=1$}{R0=1}}
\label{subsec:ccs}

Theorem~\ref{thm:general} concerns the global saddle-node fold, which generates the bistability and is the bifurcation of interest here. For completeness, we record a complementary local statement about the \emph{transcritical} bifurcation at $\Rz=1$, obtained from the Castillo--Chavez--Song center-manifold criterion \citep{castillochavez2004}. Writing the modifier's linearization at the AFE as $M=1+M_a\,a+M_{\st}\,\st+O(2)$, the CCS bifurcation coefficient decomposes as
\begin{equation}
\label{eq:ccs}
a_{\mathrm{CCS}}=a_{\mathrm{cl}}
+\frac{2\mu^2(\gamma+\mu)}{\gamma^2}\,m_1,
\qquad m_1:=M_a+M_{\st}\frac{\gamma}{\mu}=\frac{dM}{da}\Big|_{a\to0^+}\ \text{(along the branch)},
\end{equation}
with $b_{\mathrm{CCS}}=\kappa\mu/\gamma>0$, so a backward bifurcation at $\Rz=1$ occurs iff $a_{\mathrm{CCS}}>0$; this is the standard criterion for sub-threshold endemic equilibria \citep{vandenDriesscheWatmough2002,Brauer2004}. The behavioral mechanism enters the transcritical direction also through a single scalar, the branch slope $m_1=M'(0)$, and $m_1>0$ (which $M\ge1$ together with $M(0)=1$ implies) shifts $a_{\mathrm{CCS}}$ toward the backward regime. The local ($\Rz=1$) and global (fold) analyses thus isolate the same control variable: the rate at which the modifier amplifies transmission as prevalence increases. This is precisely the prevalence-amplifying feedback that drives backward bifurcations in the core-group tradition \citep{HethcoteYorke1984,hadeler1995,hethcote2000}, one of several known mechanisms that generate them \citep{Gumel2012}; our contribution is to show that rational self-protective behavior, under proportional mixing, \emph{generates} such a feedback rather than suppressing it.

\section{Numerical Results}
\label{sec:numerical}

This section quantifies the analytical results of Section~\ref{sec:general}. We integrate the adaptive system \eqref{eq:sar_adaptive} with an adaptive LSODA solver, recomputing the optimal contact $\Csa$ at each step by backward induction (Appendix~\ref{app:numerics}). Baseline parameters (Table~\ref{tab:params}) are those of \citep{sanchez2023} in the forward--backward regime ($\nu=0.8$, $\Rphi=1.54$), augmented with the behavioral parameters of \citep{fenichel2011}; $\kappa$ is varied to sweep $\Rz$ while holding the epidemiological ratios fixed. Four results follow: the effect is concentrated near $\Rz=1$ (\S\ref{subsec:where}), the fold shifts left by the amount the comparison principle predicts (\S\ref{subsec:bif}), the deterministic basin enlarges (\S\ref{subsec:basin}), and the enlargement survives stochastically in finite populations (\S\ref{subsec:stochastic}).

\begin{table}[h!]
\centering
\caption{Baseline parameters.}
\label{tab:params}
\small
\begin{tabular}{lll}
\toprule
Symbol & Value & Meaning\\
\midrule
$\beta$ & $0.009$ & social influence (recruitment) \\
$\kappa$ & varies & cost of addiction (sweeps $\Rz$) \\
$\gamma$ & $0.0027$ & recovery rate \\
$\phi$ & $0.0044$ & relapse rate ($\Rphi=1.54$) \\
$\mu$ & $0.00015$ & turnover rate \\
$\nu$ & $0.8$ & willingness factor \\
$b^s=b^r$ & $10.0$ & contact utility (susceptible/reformed) \\
$b^a$ & $6.67$ & contact utility (addicted, $\rho=b^a/b^s=0.667$) \\
$\gamma_u$ & $0.25$ & utility curvature \\
$\delta$ & $0.9999$ & daily discount factor \\
$\tau$ & $12$ & planning horizon (days) \\
\bottomrule
\end{tabular}
\end{table}

\subsection{Where behavior matters}
\label{subsec:where}
Figure~\ref{fig:effectsize} reports the relative change in the endemic
equilibrium, $|a^*_{\mathrm{adap}}-a^*_{\mathrm{class}}|/a^*_{\mathrm{class}}$, as $\Rz$ is varied via $\kappa$. The effect is largest in the threshold neighborhood $\Rz\approx1$, reaching $13$--$16\%$ just above the bifurcation, and decays monotonically as transmission strengthens: at large $\Rz$ it falls below $1\%$, because a fast-spreading epidemic saturates the population regardless of contact behavior, so adaptation barely moves the endemic level. The effect is nearly independent of the utility curvature $\gamma_u$ (the three curves differ by under two percentage points). This is exactly the location predicted by Section~\ref{sec:general}: the behavioral modifier $M$ acts on the \emph{position of the fold}, so its influence on the endemic state is greatest where the branch is most sensitive to $\Rz$ in the bistable neighborhood of the bifurcation.

\begin{figure}[ht!]
\centering
\includegraphics[width=0.72\textwidth]{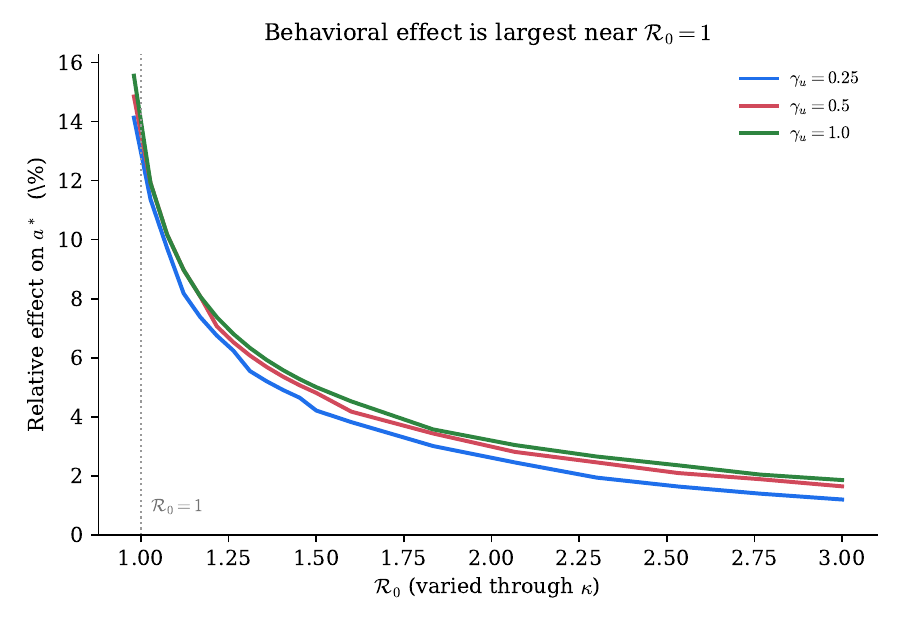}
\caption{Relative effect of adaptive behavior on the endemic equilibrium $a^*$, versus $\Rz$ (varied through $\kappa$), for three utility curvatures. The effect is largest near $\Rz=1$, reaching $13$--$16\%$ just above the bifurcation, and decays monotonically at higher transmission. Seed $a(0)=0.10$; below the classical fold the classical model has no endemic state from this seed, so the curve is shown where the comparison is well defined.}
\label{fig:effectsize}
\end{figure}

\subsection{Bifurcation diagram}
\label{subsec:bif}
Figure~\ref{fig:bifurcation} overlays the bifurcation diagrams of the classical and adaptive systems, computed by multi-initial-condition continuation with the unstable middle branch located by root-finding on the slow-manifold balance \eqref{eq:branchstate} (Appendix~\ref{app:numerics}). Both systems display the backward bifurcation: a stable endemic upper branch, the stable addiction-free branch $a^*=0$, and an unstable middle branch separating their basins. The
adaptive upper branch lies \emph{above} the classical one throughout, and its saddle-node fold occurs at lower $\Rz$: numerically $\Rcb\approx0.866$ against $\Rc^{\mathrm{cl}}\approx0.901$, a shift of $\Delta\Rz\approx-0.035$. This value agrees with the branch-minimum formula \eqref{eq:foldmin} to three significant figures when evaluated independently, confirming both the comparison principle (Theorem~\ref{thm:general}) and the numerical continuation. Adaptive behavior thus expands the bistable region toward smaller $\Rz$.

\begin{figure}[ht!]
\centering
\includegraphics[width=0.72\textwidth]{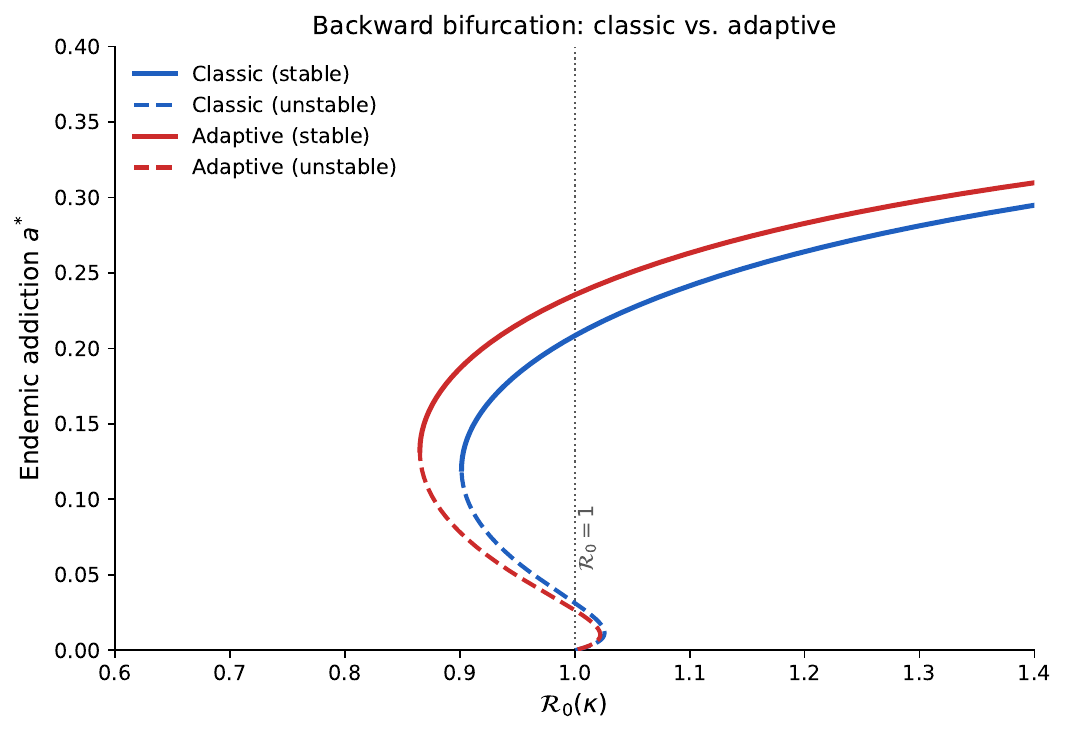}
\caption{Backward bifurcation for the classical (blue) and adaptive (red) systems. Solid: stable branches; dashed: unstable. The adaptive endemic branch sits higher and its fold shifts to lower $\Rz$ ($\Delta\Rz\approx-0.035$).}
\label{fig:bifurcation}
\end{figure}

\subsection{Enlargement of the endemic basin}
\label{subsec:basin}
In the bistable window, the outcome depends on the initial fraction of the population addicted, $a(0)$. For each $\Rz$ we locate, by bisection, the critical seed $a(0)_{\mathrm{crit}}$ above which trajectories converge to the endemic state rather than to extinction. Figure~\ref{fig:basin} and Table~\ref{tab:basin} show that the adaptive threshold lies \emph{below} the classical one across the entire window: the shaded gap is the set of initial conditions that go extinct under the classical model but persist under adaptation. The threshold reduction increases as $\Rz$ approaches the classical fold, from $2.1$ percentage points at $\Rz=0.98$ to $5.9$ percentage points at $\Rz=0.92$. The effect can reverse the qualitative outcome: at $\Rz=0.92$ with $a(0)=0.18$ between the classical threshold ($0.211$) and the adaptive one ($0.152$), the classical model converges to $a^*=0$ while the adaptive model sustains $a^*\approx0.20$. The contrast is sharpest just below the classical fold: at $\Rz=0.90$ the classical model admits \emph{no} endemic state at all (its fold lies at $\Rc^{\mathrm{cl}}\approx0.901$), yet the adaptive model sustains addiction from any seed $a(0)\gtrsim0.19$, because its fold has moved to $\Rcb\approx0.866$. These thresholds are robust to the integration horizon, holding to $\pm0.005$ for $t$ up to $2\times10^5$ days in the well-defined range $\Rz\ge0.92$.

\begin{table}[h!]
\centering
\caption{Critical initial addiction $a(0)_{\mathrm{crit}}$ for the endemic outcome. A lower threshold means a larger endemic basin. Below the classical fold ($\Rz<\Rc^{\mathrm{cl}}\approx0.901$) the classical model has no endemic state, while the adaptive model retains one down to $\Rcb\approx0.866$.}
\label{tab:basin}
\small
\begin{tabular}{cccc}
\toprule
$\Rz$ & classical & adaptive & reduction (pp) \\
\midrule
$0.90$ & none & $0.190$ & --- \\
$0.92$ & $0.211$ & $0.152$ & $5.9$ \\
$0.94$ & $0.161$ & $0.120$ & $4.0$ \\
$0.96$ & $0.121$ & $0.092$ & $2.9$ \\
$0.98$ & $0.087$ & $0.067$ & $2.1$ \\
\bottomrule
\end{tabular}
\end{table}

\begin{figure}[ht!]
\centering
\includegraphics[width=0.72\textwidth]{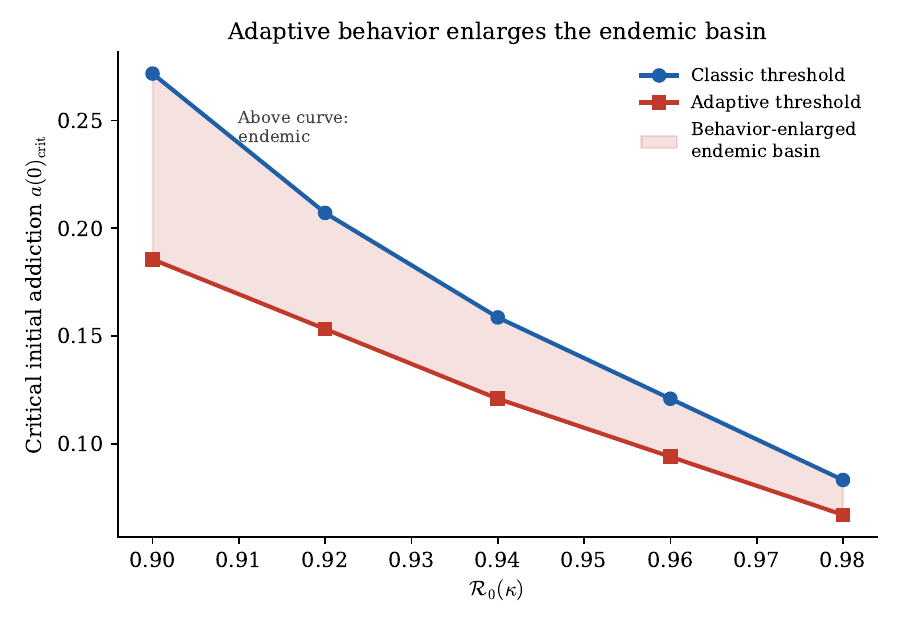}
\caption{Critical initial addiction $a(0)_{\mathrm{crit}}$ versus $\Rz$. The adaptive threshold (red) lies below the classical (blue); the shaded region is the behavior-enlarged endemic basin initial states that go extinct under the classical model but persist under adaptation.}
\label{fig:basin}
\end{figure}

\subsection{Sensitivity of the effect size to behavioral parameters}
\label{subsec:sensitivity}
The \emph{direction} of the effect is fixed by the sign of $M-1$ and is therefore robust by Theorem~\ref{thm:general}; its \emph{magnitude}, however, depends on the behavioral parameters, and Table~\ref{tab:sensitivity} reports how the adaptive fold $\Rcb$ and the critical-seed reduction $\Delta a_{\mathrm{crit}}$ (at $\Rz=0.95$) respond to the two that matter most: the activity gap $\rho=b^a/b^s$ and the planning horizon $\tau$. The activity gap is the dominant driver, exactly as Eq.~\eqref{eq:Mstatic} predicts: as $\rho\to1$ the addicted become as socially active as susceptibles, the
mixing premium $M_{\text{static}}=1/[1-a(1-\rho)]\to1$, and the effect vanishes continuously the fold shift contracts from $-0.057$ at $\rho=0.5$ to $-0.009$ at $\rho=0.9$, with $\Delta a_{\mathrm{crit}}$ falling in step from $4.6$ to $1.2$
percentage points. The planning horizon, by contrast, has only a weak influence over the addiction-relevant range ($\tau=6$ to $24$ days shifts the fold by less than
$0.003$, and leaves $\Delta a_{\mathrm{crit}}$ unchanged at $3.4$ percentage points); a longer horizon induces a slightly larger precautionary reduction, which by $\partial M/\partial\Cs>0$ nudges the fold back toward the classical value, the same attenuation mechanism discussed in Section~\ref{sec:discussion}. The effect size is thus
governed by who the addicted are (how much they withdraw from ordinary social activity), not by how far ahead at-risk individuals plan.

\begin{table}[h!]
\centering
\caption{Sensitivity of the effect size to the activity gap $\rho=b^a/b^s$ and the planning horizon $\tau$. The classical fold ($0.9014$) and classical critical seed at $\Rz=0.95$ ($a_{\mathrm{crit}}=0.140$) are independent of both. $\Delta a_{\mathrm{crit}}$ is the reduction in the critical seed at $\Rz=0.95$ (percentage points); the $\tau$ block holds $\rho=0.667$ and the $\rho$ block holds $\tau=12$. Baseline row in bold.}
\label{tab:sensitivity}
\small
\begin{tabular}{cccc}
\toprule
varied & adaptive fold $\Rcb$ & fold shift $\Delta\Rz$ & $\Delta a_{\mathrm{crit}}$ (pp) \\
\midrule
Classical ($M\equiv1$, reference) & $0.9014$ & $0$ & $0$ \\
\midrule
\multicolumn{4}{l}{\emph{Activity gap} $\rho=b^a/b^s$ (at $\tau=12$):}\\
$\rho=0.50$            & $0.845$ & $-0.057$ & $+4.6$ \\
$\boldsymbol{\rho=0.667}$ & $\mathbf{0.865}$ & $\mathbf{-0.036}$ & $\mathbf{+3.4}$ \\
$\rho=0.80$            & $0.881$ & $-0.021$ & $+2.2$ \\
$\rho=0.90$            & $0.892$ & $-0.009$ & $+1.2$ \\
\midrule
\multicolumn{4}{l}{\emph{Planning horizon} $\tau$ (days, at $\rho=0.667$):}\\
$\tau=6$               & $0.865$ & $-0.037$ & $+3.4$ \\
$\boldsymbol{\tau=12}$ & $\mathbf{0.865}$ & $\mathbf{-0.037}$ & $\mathbf{+3.4}$ \\
$\tau=24$              & $0.867$ & $-0.034$ & $+3.4$ \\
\bottomrule
\end{tabular}
\end{table}

\subsection{Stochastic establishment in finite populations}
\label{subsec:stochastic}
The deterministic basin enlargement predicts that, in a finite population, behavioral adaptation should raise the \emph{probability} that an addicted seed establishes rather than fading out. We test this on the continuous-time Markov chain of \citep[Table~1]{sanchez2023}, the stochastic analog of \eqref{eq:sar_adaptive}, modified only so that the addiction (transmission) rate carries the mixing factor $M$ of Lemma~\ref{lem:Mfactor} ($M\equiv1$ recovers the classical chain). We simulate via the Gillespie algorithm \citep{Gillespie1976,Gillespie1977}, seeding $A(0)=\lceil a_0 N_0\rceil$ addicted individuals and recording the fraction of $1000$ realizations that reach the upper endemic branch ($a>0.10$ at $t=3\times10^4$ days) rather than going extinct.

Figure~\ref{fig:stochastic} shows establishment-probability curves at $\Rz=0.95$, $N_0=1500$, where the deterministic critical seeds are $a_{\mathrm{crit}}^{\text{adap}}\approx0.106$ and $a_{\mathrm{crit}}^{\text{class}}\approx0.140$. Both curves rise with the seed fraction, but the adaptive curve lies systematically above the classical one. At $a_0=0.10$ just below \emph{both} deterministic thresholds, where the mean-field models predict extinction for both addiction, nonetheless establishes stochastically in $29\%$ of adaptive runs versus $11\%$ of classical runs, a roughly threefold difference attributable solely to behavioral adaptation. The gap is widest in and just below the threshold window because the adaptive branch reaches lower $\Rz$ and smaller seeds; finite-size fluctuations are more likely to carry an adaptive population over to the endemic attractor. The deterministic basin enlargement is thus not an artifact of the mean-field limit; it manifests as a genuine increase in the stochastic establishment probability, the quantity most relevant to whether a nascent addiction cluster takes hold in a real community of finite size.

\begin{figure}[ht!]
\centering
\includegraphics[width=0.72\textwidth]{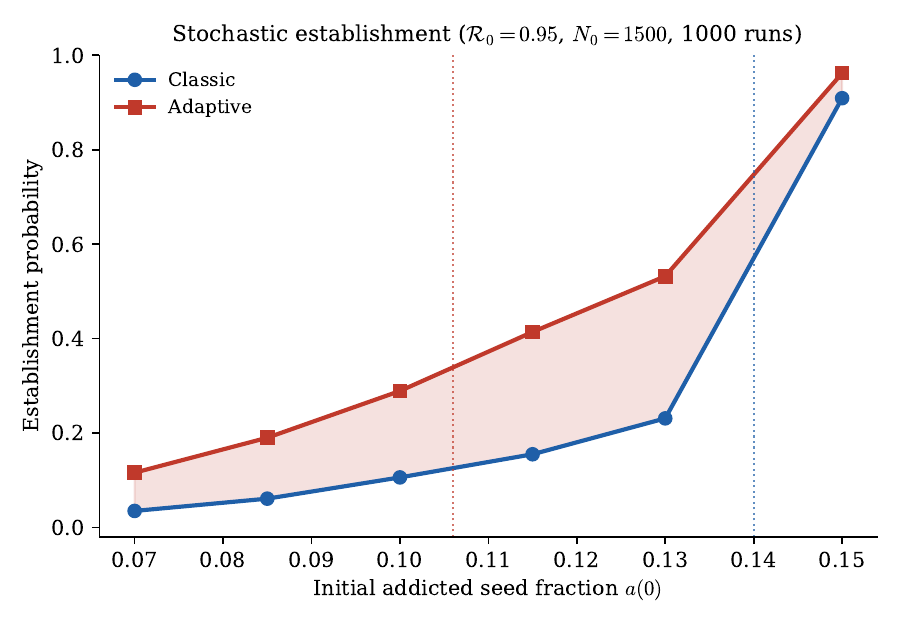}
\caption{Stochastic establishment probability versus initial addicted seed fraction $a(0)$, from Gillespie simulation of the CTMC ($\Rz=0.95$, $N_0=1500$, $1000$ realizations). Dotted lines mark the deterministic critical thresholds (adaptive $a_{\mathrm{crit}}\approx0.106$, classical $a_{\mathrm{crit}}\approx0.140$); between them, the adaptive chain (red) establishes addiction far more often than the classical chain (blue).}
\label{fig:stochastic}
\end{figure}

\paragraph{Robustness across population size and $\Rz$.}
Figure~\ref{fig:stochrobust} confirms that the establishment gap is not specific to one operating point. Panel (a) varies the population size $N_0$ at $\Rz=0.95$, $a_0=0.10$: the adaptive establishment probability exceeds the classical one at every $N_0$ from $500$ to $4000$ (for example, $0.28$ versus $0.09$ at $N_0=1000$), with $95\%$ Wilson confidence intervals that do not overlap. Both probabilities decline with $N_0$; larger populations track the deterministic prediction of extinction below the classical threshold, but the adaptive curve declines more slowly, consistent with its lower deterministic threshold. Panel (b) varies $\Rz$ at $N_0=1500$: the gap is widest at and just below $\Rz=1$ ($0.74$ versus $0.49$ at $\Rz=1.00$, non-overlapping intervals) and closes at both extremes, as both chains approach certain establishment for large $\Rz$ and certain extinction for small $\Rz$ the same near-threshold concentration seen deterministically in Figure~\ref{fig:effectsize}. All probabilities are Monte Carlo estimates based on $1000$ realizations, with Wilson $95\%$ score intervals \citep{Wilson1927,BrownCaiDasGupta2001}.

\begin{figure}[ht!]
\centering
\includegraphics[width=0.95\textwidth]{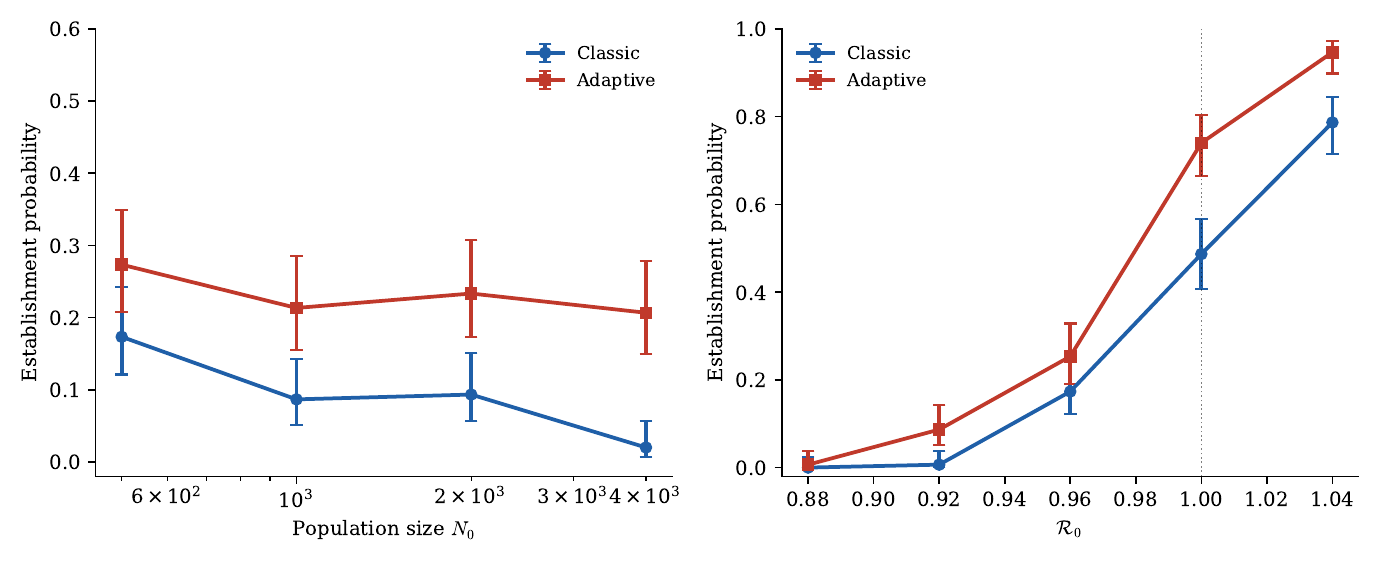}
\caption{Stochastic establishment probability with Wilson $95\%$ confidence intervals. (a) Versus population size $N_0$ at $\Rz=0.95$; (b) versus $\Rz$ at $N_0=1500$. The adaptive chain (red) establishes addiction more often than the classical chain (blue) throughout, with the largest gap near $\Rz=1$.}
\label{fig:stochrobust}
\end{figure}

\section{Why Self-Protection Sustains Addiction}
\label{sec:mechanism}

Section~\ref{sec:general} reduces the entire question to the sign of $M-1$: the fold moves left, enlarging the endemic basin, precisely because $M\ge1$ along the branch (Lemma~\ref{lem:m1pos}). But why should rational self-protection produce $M\ge1$ an \emph{increase} in effective transmission rather than the decrease one would naively expect? This section gives the mechanistic answer. Two distinct effects, both consequences of the conditional proportional mixing \eqref{eq:foi}, combine to keep $M$ above unity: the first concerns how transmission responds to a susceptible's \emph{own} contact choice, the second the \emph{baseline} level of transmission even absent any behavioral change.

\paragraph{(i) Self-protection is sub-linearly effective.}
Fix the population at a representative endemic equilibrium of the adaptive system (at $\Rz=0.95$, $(s,a,\st)=(0.256,0.215,0.530)$) and ask how the force of addiction responds as a susceptible cuts their own contact $\Cs$ below the static optimum (Figure~\ref{fig:mechanism}). Because $\Cs$ enters \emph{both} the numerator and the mixing denominator $\bar C=s\Cs+a\Ca+\st\Cr$ of \eqref{eq:foi}, cutting contact also shrinks the shared contact pool, raising the conditional probability that each remaining contact is with an addicted individual. The two effects partly offset, so $\lambda$ falls sub-linearly: cutting $\Cs$ from $5.0$ to
$3.0$, a $40\%$ reduction, lowers $\lambda$ by only $33\%$. A susceptible cannot withdraw from addicted contacts as fast as from contacts in general, so self-protection buys less risk reduction than its effort would suggest, and the optimizing agent, anticipating this, cuts contact only modestly to begin with.

\paragraph{(ii) Proportional mixing exceeds frequency dependence off-equilibrium.} The second effect is present \emph{even if the susceptible does not adapt at all}. Evaluating \eqref{eq:foi} at the static optimum $\Cs=C^{s*}$, the conditional proportional form still delivers $7.7\%$ higher transmission than the frequency-dependent term $\beta g(\st)a$ of the classical model at this endemic state (Figure~\ref{fig:mechanism}, the gap at $\Cs=5$). The reason is structural: the calibration \eqref{eq:calib} forces the two forms to coincide at the addiction-free equilibrium, but away from it the mean contact level $\bar C$ falls
below $C^{s*}$ as the population shifts into the lower-activity addicted and reformed classes ($\Ca<C^{s*}$). A smaller $\bar C$ inflates the per-contact probability of meeting an addicted individual, so $M=C^{s*}/\bar C>1$. This is the mixing effect of Eq.~\eqref{eq:Mstatic}, here seen as a transmission premium that the classical model omits.

Mechanisms (i) and (ii) are distinct in a comparative static in the agent's own choice, the other a level difference at the optimum, but they push the same way, and together they keep $M\ge1$ throughout the branch (Lemma~\ref{lem:m1pos}). By the comparison principle (Theorem~\ref{thm:general}), the effective transmission at the endemic equilibrium therefore exceeds the classical value, raising the upper branch and pulling the fold to lower $\Rz$. Rational self-protection backfires not because agents misjudge their risk, but because the geometry of proportional mixing converts a population's withdrawal into a concentration of residual risk.

\begin{figure}[ht!]
\centering
\includegraphics[width=0.72\textwidth]{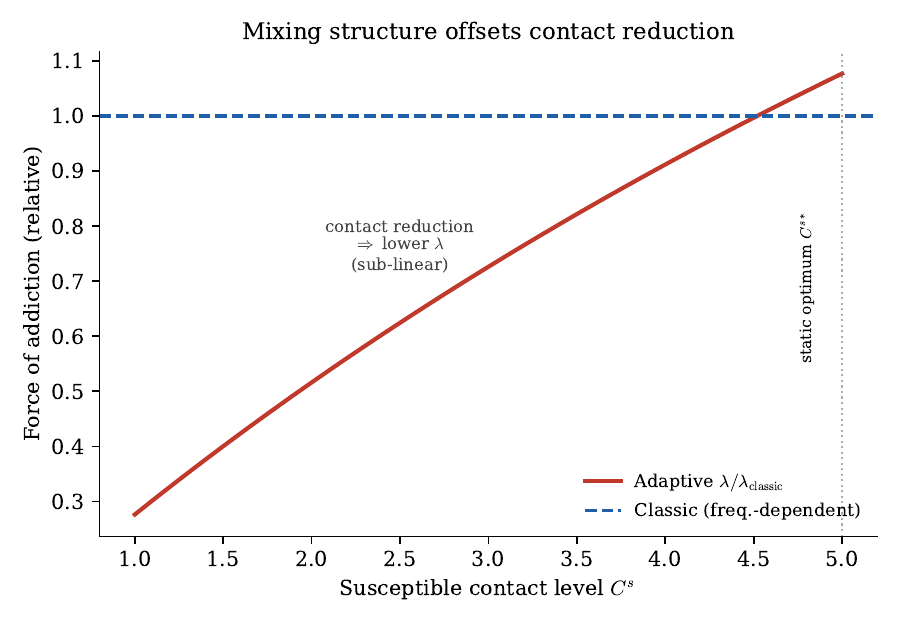}
\caption{Force of addiction relative to the classical frequency-dependent value, as a function of susceptible contact $\Cs$, at a representative adaptive endemic equilibrium ($\Rz=0.95$, $(s,a,\st)=(0.256,0.215,0.530)$). Reducing $\Cs$ lowers $\lambda$ only sub-linearly (mechanism i); already at the static optimum $\Cs=C^{s*}$ the proportional mixing form exceeds frequency dependence by $7.7\%$ (mechanism ii).}
\label{fig:mechanism}
\end{figure}

\subsection{Contingency on the mixing form: a model-discriminating prediction}
\label{subsec:mixingform}
Corollary~\ref{cor:dichotomy} makes the dependence on the mixing assumption sharp rather than hidden. Because the behavioral layer enters only through $M$, the sign of the basin effect is fixed by which side of $M\equiv1$ the chosen coupling falls on, and the two standard couplings fall on opposite sides:
\begin{itemize}
\item \textbf{Conditional proportional mixing} (this paper):
  $M=C^{s}/(s\Cs+a\Ca+\st\Cr)\ge1$ along the branch (Lemma~\ref{lem:m1pos}). The denominator normalization is exactly what makes a susceptible's \emph{relative} exposure rise as the population deactivates, so $M>1$ and the basin \emph{enlarges}.
\item \textbf{Frequency-dependent coupling}: if the contact decision instead scales transmission linearly, without renormalizing by the mean contact level, $\lambda=\beta g(\st)\,a\,(\Cs/c_0)$, then $M=\Cs/c_0\le1$. By Theorem~\ref{thm:general}, the fold moves \emph{right} and the basin \emph{shrinks}.
\end{itemize}
The two assumptions thus make \emph{opposite} qualitative predictions about the sign of the behavioral effect (Figure~\ref{fig:mixingform}). This is more than a caveat: it identifies the contact-scaling law as the sole modeling choice that determines the direction of the effect and is, in principle, model-discriminating. A study able to measure both the change in individual contact during an addiction wave and the resulting change in incidence could determine which coupling governs the data, and hence whether voluntary precaution accelerates or retards establishment. We adopt the conditional proportional form as the more defensible default it is standard in the epi-economic literature \citep{fenichel2011}, and encodes the realistic feature that withdrawing from general social activity does not proportionally withdraw one from contact with \emph{addicted} individuals but, rather than leave this assumption implicit, the comparison principle makes the alternative's prediction explicit and, at least in principle, falsifiable.

\paragraph{A testable implication, and an open measurement question.}
The dichotomy yields a concrete empirical signature. Suppose one tracks, over the course of an addiction wave, both the typical contact level of at-risk individuals and the per-susceptible incidence of new addiction. Under frequency-dependent coupling, the two move together: a measured drop in individual contact translates into a proportional drop in incidence, so voluntary precaution visibly retards spread. Under conditional proportional mixing, it does not because withdrawal concentrates the residual contacts of the still-active among the addicted; a measured drop in average contact is accompanied by incidence that falls less than proportionally, holds steady, or even rises, the empirical fingerprint of $M>1$. The sign of the contact--incidence co-movement, conditional on prevalence, therefore discriminates the two kernels directly, without recourse to the model. Realizing this test requires longitudinal, individual-level contact data in a substance-use setting who the at-risk associate with, and how that composition shifts as prevalence climbs paired with incidence; the social-contact surveys that have made this kind of measurement routine for respiratory pathogens \citep{Mossong2008} have, to our knowledge, no established counterpart for addiction, where the relevant ``contact'' is diffuse social exposure rather than a discrete encounter and is correspondingly harder to define and observe. We therefore present the contact-scaling law not as a settled modeling choice but as an open measurement question whose resolution our framework reduces to a single observable sign.

\begin{figure}[ht!]
\centering
\includegraphics[width=0.70\textwidth]{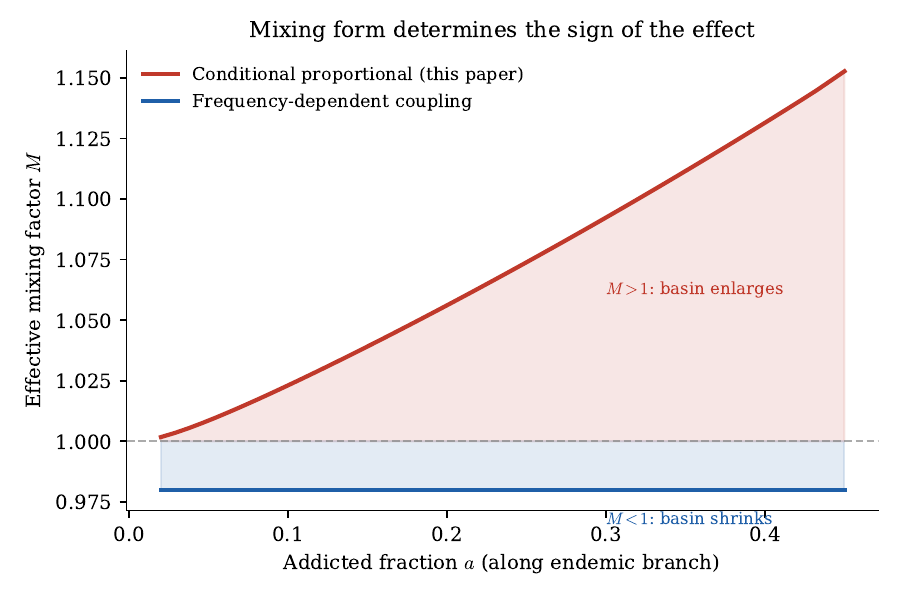}
\caption{Effective mixing factor $M$ along the endemic branch under conditional proportional mixing (red, $M>1$, basin enlarges) versus frequency-dependent coupling (blue, $M<1$, basin shrinks). The two assumptions make opposite qualitative predictions.}
\label{fig:mixingform}
\end{figure}

\section{Policy Implications}
\label{sec:policy}

The model is deliberately stylized and calibrated with behavioral parameters borrowed from the infectious-disease setting (Section~\ref{sec:numerical}), so the implications below are qualitative and mechanism-level rather than operational. Each follows from a structural feature of the analysis and would need calibration to addiction data before informing a specific intervention.

\paragraph{Eradication thresholds from behavior-free models are optimistic.}
The classical SAR model implies that pushing $\Rz$ below the fold $\Rc^{\mathrm{cl}}$ guarantees that addiction dies out. The comparison principle (Theorem~\ref{thm:general}) and Figure~\ref{fig:basin} show that, under proportional mixing, adaptation lowers the effective fold to $\Rcb<\Rc^{\mathrm{cl}}$: a residual bistable region survives between the two folds, within which an established epidemic persists even though the behavior-free model predicts elimination. A control program calibrated to the classical threshold, therefore, aims at a target that is too weak. This is the most robust policy reading of our results, as it follows directly from the sign of $M-1$ and does not depend on the magnitude of the behavioral response. The difficulty of eradicating an entrenched epidemic despite sustained control salient in the trajectory of the US opioid crisis, where the burden has continued to climb under intensive intervention \citep{Zhang2024} is exactly the hysteresis that a backward bifurcation formalizes, and that behavioral adaptation here deepens.

\paragraph{Public deterrence and private caution are substitutes, weakly.}
The willingness factor $\nu$ (public deterrence by the reformed population) and the private contact reduction $C^{s*}-\Csa$ act in the same direction in the first-order condition \eqref{eq:foc}: stronger public deterrence lowers $g(\st)$, hence the marginal addiction risk, hence the private incentive to cut contact, so $\Csa$ rises with $\nu$. This crowding-out is the addiction analog of the public-private substitution noted by \citep{fenichel2011}. In our model, the effect is present but small in the addiction-relevant regime; raising $\nu$ from $0$ to $1$ increases $\Csa$ by under $1\%$ at baseline and grows only as transmission intensity and the planning horizon increase (reaching the order of tens of percent for influenza-scale $\beta$ and multi-week horizons). The qualitative caution stands that public and private mitigation are not simply additive, but the model does not support treating the substitution as quantitatively important at addiction timescales.

\paragraph{Encouraging self-protection has an ambiguous net effect.}
The behavioral effect on the endemic level is concentrated near $\Rz=1$
(Figure~\ref{fig:effectsize}), so interventions that elicit precautionary contact reduction have the greatest leverage in populations near the bifurcation. The direction of that leverage, however, is not guaranteed to be benign. Under conditional proportional mixing, the analysis of Section~\ref{sec:mechanism} shows that individual contact reduction is partially self-defeating: because withdrawal concentrates a susceptible's residual contacts among the addicted, encouraging self-protection can \emph{enlarge} the endemic basin rather than shrink it. Whether it helps or harms depends on the contact-scaling law (Corollary~\ref{cor:dichotomy}), which the present model cannot resolve from first principles. The policy implication is therefore a caution rather than a prescription: campaigns that promote individual avoidance during an addiction wave should not be assumed to reduce long-run prevalence, and their sign should be established empirically by measuring how contact and incidence co-move before they are relied upon.

\section{Discussion}
\label{sec:discussion}

Embedding adaptive contact behavior in the SAR addiction model leaves the basic reproductive number and the addiction-free stability condition untouched (Theorem~\ref{thm:R0}) but reshapes the endemic landscape through a single, exactly characterized channel. The behavioral layer collapses to a scalar modifier $M$ (Lemma~\ref{lem:Mfactor}); along the endemic branch the bifurcation curve factorizes as $\Rz(a)=\Rcl(a)/M(a)$ (Lemma~\ref{lem:branch}); and the saddle-node fold moves left iff $M\ge1$ and right iff $M\le1$ (Theorem~\ref{thm:general}), a dichotomy independent of the mixing form. Self-protective behavior under proportional mixing satisfies $M\ge1$ (Lemma~\ref{lem:m1pos}), so it enlarges the endemic basin; the effect is concentrated near $\Rz=1$ (Figure~\ref{fig:effectsize}), shifts the fold by $\Delta\Rz\approx-0.035$ (Figure~\ref{fig:bifurcation}), lowers the critical initial addiction (Figure~\ref{fig:basin}, Table~\ref{tab:basin}), and raises stochastic establishment probability up to threefold near threshold (Figures~\ref{fig:stochastic}--\ref{fig:stochrobust}).

Three caveats bound the result, and we have made each precise rather than rhetorical. (i) The comparison principle (Theorem~\ref{thm:general}) is exact and mixing-form-agnostic, but applying it to the adaptive model requires $M\ge1$, which holds under the sufficient condition of Lemma~\ref{lem:m1pos} satisfied throughout the addiction-relevant regime (all $\tau\le82$ days, all $\beta\le0.052$) but capable of failing for implausibly long horizons or influenza-scale transmission, where strong behavioral withdrawal would push $M$ below $1$ and, by the same theorem, reverse the direction. (ii) The branch modifier $M(a)$ depends on the optimal contact $\Csa$, which itself depends on $\Rz$ through the FOC; the factorization \eqref{eq:Rcurve} is therefore a fixed point evaluated at $\Rz=\Rcb$, and the $M\ge1$ bound is verified to be self-consistent at the fold. (iii) The behavioral model assumes homogeneous, rational agents with adaptive expectations and an exogenous planning horizon. The behavioral response is modest in absolute terms at the slow timescales of addiction ($\beta\kappa\approx0.003\,\mathrm{day}^{-1}$, some $30\times$ slower than influenza in \citealp{fenichel2011}); the qualitative basin effect survives nonetheless deterministically (Figure~\ref{fig:basin}) and stochastically across $N_0$ and $\Rz$ (Figure~\ref{fig:stochrobust}) because it operates precisely where the system is most sensitive.

Our use of adaptive rather than fully rational expectations invites an obvious question: would a forward-looking agent who anticipates the entire prevalence trajectory as in a Hamilton--Jacobi--Bellman (HJB) or differential-game formulation \citep{reluga2010,Toxvaerd2020} overturn the result? The comparison principle (Theorem~\ref{thm:general}) makes the answer unusually clean, because it is indifferent to \emph{how} the contact level is formed: the basin enlarges if and only if $M\ge1$ along the branch, whatever the expectations scheme, so only the magnitude and possibly the sign of $M-1$ is at issue. Here the direction runs opposite to naive intuition. An HJB agent anticipating rising prevalence cuts contact earlier and more sharply than an adaptive-expectations agent; since $M$ is increasing in the susceptible's own contact ($\partial M/\partial\Cs>0$), this \emph{larger} precautionary reduction pushes $M$ toward unity and thus \emph{attenuates} the basin enlargement rather than amplifying it. The qualitative result is robust nonetheless, because the inequality $M\ge1$ rests on the structural mixing premium $M_{\text{static}}\ge1$ of Eq.~\eqref{eq:Mstatic} present even absent any behavioral change and independent of the expectations scheme against which the precautionary reduction works but which it need not overturn. We therefore expect a fully rational treatment to preserve the qualitative result for the modest reductions characteristic of addiction's slow timescale while shrinking its magnitude; only under implausibly aggressive precaution would $M$ fall below one and, by the same theorem, reverse the sign precisely the single inequality such a treatment would need to re-check.

Natural next steps are (a) a social planner formulation to quantify the crowding-out wedge; (b) heterogeneous susceptibility $b^a$, yielding a distribution of contact choices; and (c) the fully rational (Hamilton--Jacobi--Bellman) treatment discussed above, which would best-respond to the anticipated trajectory rather than to current prevalence. The comparison principle extends verbatim to any of these as long as the resulting modifier $M$ admits the branch factorization \eqref{eq:Rcurve}; only the sign of $M-1$ needs to be re-checked. The stochastic establishment analysis begun in Section~\ref{subsec:stochastic} could be extended to quasi-stationary distributions and mean times to extinction.

\appendix

\section{Derivation of \texorpdfstring{$V_k(a)$}{Vk(a)}}
\label{app:Vi}
An addicted individual recovers with per-period probability $P^r=1-e^{-\gamma}$, upon which they enjoy the reformed flow utility $u^r=u^r(C^{r*})$ in each subsequent period; while addicted, the flow utility is $u^a(C^{a*})=0$ by the scaling of Section~\ref{sec:behavior}. With $k$ periods remaining, the value of being addicted satisfies
\[
V_k(a)=\delta\bigl[(1-P^r)V_{k-1}(a)+P^r V_{k-1}(r)\bigr],\qquad
V_{k-1}(r)=u^r\,\frac{1-\delta^{k-1}}{1-\delta},\quad V_0(a)=0,
\]
where $V_{k-1}(r)$ is the discounted reformed stream evaluated as of the period of recovery, and the leading $\delta$ discounts the one-period wait until the transition is realized. The homogeneous part contracts at rate $\delta(1-P^r)$; solving the linear recurrence gives the closed form \eqref{eq:Vi}, which we have verified both symbolically and by Monte Carlo simulation of the underlying absorbing recovery process.

\section{Numerical methods}
\label{app:numerics}
The Bellman problem \eqref{eq:bellman} is solved by backward induction on a
$100$--$200$ point grid for $\Cs\in[0,b^s/2]$, with $V_k(a)$ from \eqref{eq:Vi} and a rolling horizon $\tau$. The ODE \eqref{eq:sar_adaptive} is integrated with LSODA \citep{Petzold1983} (\texttt{rtol}=$10^{-8}$, \texttt{atol}=$10^{-10}$). Endemic equilibria are obtained by long-time integration ($t$ up to $2\times10^5$ days) from high and low initial conditions; critical thresholds in Table~\ref{tab:basin} by bisection to $\pm0.005$. The branch-minimum fold \eqref{eq:foldmin} is evaluated on a $10^5$-point grid in $a$; it reproduces the closed-form classical fold (\citealp[Eq.~5]{sanchez2023}, $\nu=0$) to machine precision and the directly continued adaptive fold ($\nu=0.8$) to three digits. The unstable middle branch
in Figure~\ref{fig:bifurcation} is obtained by root-finding on the steady-state condition $\dot a=0$ along the slow manifold \eqref{eq:branchstate}.

Single-peakedness of the susceptible's objective \eqref{eq:bellman} was verified numerically by evaluating the objective on the same $200$-point grid in $\Cs\in[0,b^s/2]$ used by the solver, at $1{,}920$ states formed as the Cartesian product of a grid over the planning horizon $\tau$, the utility curvature $\gamma_u$, and the basic reproductive number $\Rz$ across the ranges of Section~\ref{sec:behavior} ($\tau\in[6,60]$ days, $\gamma_u\in[0.1,1]$, $\Rz\in[0.90,1.00]$) with the endemic-branch state \eqref{eq:branchstate} at each $\Rz$; the objective exhibited a unique interior maximum in every case, with no instance of a second local maximum or a boundary optimum.


\begin{thebibliography}{10}

\bibitem{bauch2013}
C.~T. Bauch and A.~P. Galvani.
\newblock Social factors in epidemiology.
\newblock {\em Science}, 342(6154):47--49, 2013.

\bibitem{Behrens2000}
D.~A. Behrens, J.~P. Caulkins, G.~Tragler, and G.~Feichtinger.
\newblock Optimal control of drug epidemics: prevent and treat but not at the same time?
\newblock {\em Manage. Sci.}, 46(3):333--347, 2000.

\bibitem{BlytheCastilloChavez1989}
S.~P. Blythe and C.~Castillo-Chavez.
\newblock Mixing framework for social/sexual behavior.
\newblock In {\em Mathematical and Statistical Approaches to AIDS Epidemiology}, volume~83 of {\em Lecture Notes in Biomathematics}, pages 275--288. Springer, 1989.

\bibitem{Brauer2004}
F.~Brauer.
\newblock Backward bifurcations in simple vaccination models.
\newblock {\em J. Math. Anal. Appl.}, 298(2):418--431, 2004.

\bibitem{BrownCaiDasGupta2001}
L.~D. Brown, T.~T. Cai, and A.~DasGupta.
\newblock Interval estimation for a binomial proportion.
\newblock {\em Stat. Sci.}, 16(2):101--117, 2001.

\bibitem{BusenbergCastilloChavez1991}
S.~Busenberg and C.~Castillo-Chavez.
\newblock A general solution of the problem of mixing of subpopulations and its application to risk- and age-structured epidemic models for the spread of aids.
\newblock {\em IMA J. Math. Appl. Med. Biol.}, 8(1):1--29, 1991.

\bibitem{castillochavez2004}
C.~Castillo-Chavez and B.~Song.
\newblock Dynamical models of tuberculosis and their applications.
\newblock {\em Math. Biosci. Eng.}, 1(2):361--404, 2004.

\bibitem{Dong2025}
D.~Chen, Y.~Sun, X.~Li, and Z.~Yang.
\newblock Global burden of drug use disorders from 1990 to 2021 and projections to 2046.
\newblock {\em Front. Public Health}, 13:1550518, 2025.

\bibitem{Chen2009}
F.~H. Chen.
\newblock Modeling the effect of information quality on risk behavior change and the transmission of infectious diseases.
\newblock {\em Math. Biosci.}, 217(2):125--133, 2009.

\bibitem{DushoffHuangCastilloChavez1998}
J.~Dushoff, W.~Huang, and C.~Castillo-Chavez.
\newblock Backwards bifurcations and catastrophe in simple models of fatal diseases.
\newblock {\em J. Math. Biol.}, 36(3):227--248, 1998.

\bibitem{fenichel2011}
E.~P. Fenichel et~al.
\newblock Adaptive human behavior in epidemiological models.
\newblock {\em Proc. Natl. Acad. Sci. USA}, 108(15):6306--6311, 2011.

\bibitem{funk2010}
S.~Funk, M.~Salathé, and V.~A.~A. Jansen.
\newblock Modelling the influence of human behaviour on the spread of infectious diseases: a review.
\newblock {\em J. R. Soc. Interface}, 7(50):1247--1256, 2010.

\bibitem{GeoffardPhilipson1996}
P.-Y. Geoffard and T.~Philipson.
\newblock Rational epidemics and their public control.
\newblock {\em Int. Econ. Rev.}, 37(3):603--624, 1996.

\bibitem{Gillespie1976}
D.~T. Gillespie.
\newblock A general method for numerically simulating the stochastic time evolution of coupled chemical reactions.
\newblock {\em J. Comput. Phys.}, 22(4):403--434, 1976.

\bibitem{Gillespie1977}
D.~T. Gillespie.
\newblock Exact stochastic simulation of coupled chemical reactions.
\newblock {\em J. Phys. Chem.}, 81(25):2340--2361, 1977.

\bibitem{Gumel2012}
A.~B. Gumel.
\newblock Causes of backward bifurcations in some epidemiological models.
\newblock {\em J. Math. Anal. Appl.}, 395(1):355--365, 2012.

\bibitem{hadeler1995}
K.~P. Hadeler and C.~Castillo-Chavez.
\newblock A core group model for disease transmission.
\newblock {\em Math. Biosci.}, 128:41--55, 1995.

\bibitem{hethcote2000}
H.~W. Hethcote.
\newblock The mathematics of infectious diseases.
\newblock {\em SIAM Rev.}, 42(4):599--653, 2000.

\bibitem{HethcoteYorke1984}
H.~W. Hethcote and J.~A. Yorke.
\newblock {\em Gonorrhea Transmission Dynamics and Control}, volume~56 of {\em Lecture Notes in Biomathematics}.
\newblock Springer, 1984.

\bibitem{Jacquez1988}
J.~A. Jacquez, C.~P. Simon, J.~Koopman, L.~Sattenspiel, and T.~Perry.
\newblock Modeling and analyzing hiv transmission: the effect of contact patterns.
\newblock {\em Math. Biosci.}, 92(2):119--199, 1988.

\bibitem{Memarbashi2022}
R.~Memarbashi, A.~Ghasemabadi, and Z.~Ebadi.
\newblock Backward bifurcation in a two strain model of heroin addiction.
\newblock {\em Comput. Methods Differ. Equ.}, 10(3):656--673, 2022.

\bibitem{Mossong2008}
J.~Mossong et~al.
\newblock Social contacts and mixing patterns relevant to the spread of infectious diseases.
\newblock {\em PLoS Med.}, 5(3):e74, 2008.

\bibitem{UNODC2025}
United Nations~Office on~Drugs and Crime.
\newblock {\em World Drug Report 2025}.
\newblock United Nations, Vienna, 2025.

\bibitem{WHO2024}
World~Health Organization.
\newblock {\em Global Status Report on Alcohol and Health and Treatment of Substance Use Disorders}.
\newblock World Health Organization, Geneva, 2024.

\bibitem{Petzold1983}
L.~R. Petzold.
\newblock Automatic selection of methods for solving stiff and nonstiff systems of ordinary differential equations.
\newblock {\em SIAM J. Sci. Stat. Comput.}, 4(1):136--148, 1983.

\bibitem{reluga2010}
T.~C. Reluga.
\newblock Game theory of social distancing in response to an epidemic.
\newblock {\em PLoS Comput. Biol.}, 6(5):e1000793, 2010.

\bibitem{Rowe1992}
D.~C. Rowe, L.~Chassin, C.~C. Presson, D.~Edwards, and S.~J. Sherman.
\newblock An ``epidemic'' model of adolescent cigarette smoking.
\newblock {\em J. Appl. Soc. Psychol.}, 22(4):261--285, 1992.

\bibitem{sanchez2023}
F.~Sanchez, J.~Arroyo-Esquivel, and J.~G. Calvo.
\newblock A mathematical model with nonlinear relapse: conditions for a forward-backward bifurcation.
\newblock {\em J. Biol. Dyn.}, 17(1):2192238, 2023.

\bibitem{sanchez2007}
F.~Sanchez, X.~Wang, and C.~Castillo-Chavez.
\newblock Drinking as an epidemic.
\newblock In {\em Therapist's Guide to Evidence Based Relapse Prevention}, pages 353--368. Academic Press, 2007.

\bibitem{Toxvaerd2020}
F.~Toxvaerd.
\newblock Equilibrium social distancing.
\newblock Technical Report 2020/08, Cambridge-INET, 2020.

\bibitem{vandenDriesscheWatmough2002}
P.~van~den Driessche and J.~Watmough.
\newblock Reproduction numbers and sub-threshold endemic equilibria for compartmental models of disease transmission.
\newblock {\em Math. Biosci.}, 180:29--48, 2002.

\bibitem{Wilson1927}
E.~B. Wilson.
\newblock Probable inference, the law of succession, and statistical inference.
\newblock {\em J. Am. Stat. Assoc.}, 22(158):209--212, 1927.

\bibitem{Zhang2024}
T.~Zhang, L.~Sun, X.~Yin, et~al.
\newblock Burden of drug use disorders in the United States from 1990 to 2021 and its projection until 2035: results from the GBD study.
\newblock {\em BMC Public Health}, 24:1639, 2024.

\end{thebibliography}
\end{document}